\newtheorem{theorem}{Theorem}[section]
\newtheorem{lemma}[theorem]{Lemma}
\newtheorem{corollary}[theorem]{Corollary}
\newtheorem{proposition}[theorem]{Proposition}
\newtheorem*{notation*}{Notation}
\newtheorem*{p*}{Proposition~\ref{h.s.o.p}}
\theoremstyle{definition}
\newtheorem{definition}[theorem]{Definition}
\newtheorem{example}[theorem]{Example}
\newtheorem{remark}[theorem]{Remark}
\newcommand{\M}{{\operatorname{Mat}}}
\newcommand{\C}{{\mathbb C}}
\newcommand{\Z}{{\mathbb Z}}
\newcommand{\K}{K}
\newcommand{\sgn}{\operatorname{sgn}}
\newcommand{\Hom}{\operatorname{Hom}}
\newcommand{\kar}{\operatorname{char}}
\newcommand{\rk}{\operatorname{rk}}
\newcommand{\trk}{\operatorname{trk}}
\newcommand{\brk}{\operatorname{brk}}
\newcommand{\B}{{\mathscr B}}
\newcommand{\CC}{{\mathscr C}}
\newcommand{\E}{{\mathscr E}}
\newcommand{\F}{{\mathscr F}}
\newcommand{\DD}{{\mathscr D}}
\newcommand{\id}{\operatorname{id}}
\title{Explicit tensors of border rank at least $2d-2$ in $K^d \otimes K^d \otimes K^d$ in arbitrary characteristic}
\author{Harm Derksen and Visu Makam}
\thanks{The authors were supported by NSF grant DMS-1601229}
\begin{document}
\maketitle

\begin{abstract}
For tensors in $\C^d \otimes \C^d \otimes \C^d$, Landsberg provides non-trivial equations for tensors of border rank $2d-3$ for $d$ even and $2d-5$ for $d$ odd in \cite{Landsberg}. In \cite{DM2}, we observe that Landsberg's method can be interpreted in the language of tensor blow-ups of matrix spaces, and using concavity of blow-ups we improve the case for odd $d$ from $2d-5$ to $2d-4$. The purpose of this paper is to show that the aforementioned results extend to tensors in $K^d \otimes K^d \otimes K^d$ for any field $K$.
\end{abstract}

\section{Introduction}
Over the last decade, tensors have received a lot of attention as a consequence of its wide ranging applications in mathematics as well as other scientific disciplines. We refer to \cite{Lbook} for several open conjectures in the subject, as well as a detailed introduction to the subject. The subject begins with the concept of tensor rank which is a generalization of matrix rank.
\begin{definition}
For a tensor $T \in K^{a_1} \otimes K^{a_2} \otimes \dots \otimes K^{a_l}$, we define its tensor rank $\trk(T)$ to be the smallest integer $m$ such that $T$ can be written as a sum of $m$ pure tensors.  \end{definition}

Let $Z_m$ denote the set of tensors of rank $\leq m$. The set $Z_m$ need not be Zariski closed, and we consider its Zariski closure $\overline{Z}_m$. This gives rise to the definition of border rank.

\begin{definition}
For a tensor $T$, we define its border rank $\brk(T)$ to be the  smallest integer $m$ such that $T \in \overline{Z}_m$.
\end{definition}   

It is only natural to try and understand the polynomials that define the closed subset $\overline{Z}_m$. If $f$ is a polynomial that vanishes on $\overline{Z}_m$ (or even $Z_m$), then if $f(T) \neq 0$ for some tensor, we immediately know that $\rm{brk}(T) > m$. In other words, $f$ can be used a test to prove that a tensor has border rank $> m$.

\subsection{Blow-ups of linear subspaces}
Flattenings are a useful tool to find polynomial tests for the border rank of tensors in $\K^a \otimes K^b \otimes K^c$. We present flattenings using the language of blow-ups of linear subspaces of matrices and the combinatorics of their ranks. We will recall these notions briefly.

Let $\M_{r,s}$ denote the set of $r \times s$ matrices with entries in the field $K$. Let $\mathcal{X}$ be a linear subspace of $\M_{r,s}$. We define blow-ups of $\mathcal{X}$.

\begin{definition}
Let $\mathcal{X} \subseteq \M_{r,s}$ be a linear subspace. We define its $(p,q)$ tensor blow-up $\mathcal{X}^{\{p,q\}}$ to be 
$$
\mathcal{X} \otimes \M_{p,q} = \Big\{ \sum_i X_i \otimes T_i\ \Big|\  X_i \in \mathcal{X}, T_i \in \M_{p,q}\Big\},
$$
 viewed as a subspace of $\M_{rp,sq}.$ We will write $\mathcal{X}^{\{d\}} = \mathcal{X}^{\{d,d\}}$.
\end{definition}

\begin{definition}
The rank of a linear subspace $\mathcal{X} \subseteq \M_{r,s}$ is given by 
\begin{equation*}
\rk(\mathcal{X}) = \max \{ \rk(X)\ |\ X \in \mathcal{X}\}.
\end{equation*}
\end{definition} 



We will now describe a method of finding lower bounds for border rank in tensor product spaces with three tensor factors. Given a tensor $T \in K^a \otimes K^b \otimes K^c$, we can write $T = \sum_i s_i \otimes X_i$, with $s_i \in \K^a$ and $X_i \in K^b \otimes K^c$. Let $L:K^a \rightarrow \M_{p,q}$ be a linear map, and denote the image by $\mathcal{X}_L$. We identify $K^b \otimes K^c$ with $\M_{b,c}$, and identify $\M_{p,q} \otimes \M_{b,c}$ with $\M_{pb,qc}$. This gives the following map. 

\begin{equation*}
\begin{array}{ccc}
\phi_L:K^a \otimes K^b \otimes K^c &\longrightarrow &\M_{pb,qc} \\
\sum\limits_i s_i \otimes X_i &\longmapsto & \sum\limits_i L(s_i) \otimes X_i.
\end{array}
\end{equation*}

In \cite{DM2}, we describe how this map can be used to prove lower bounds for tensors.

\begin{lemma} [\cite{DM2}] \label{low.bds}
We have $\brk(T) \geq \displaystyle \frac{\rk(\phi_L(T))}{\rk(\mathcal{X}_L)}$.
\end{lemma}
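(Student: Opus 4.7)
The plan is to prove the lemma in two stages. First I would establish the bound $\rk(\phi_L(T)) \leq \trk(T) \cdot \rk(\mathcal{X}_L)$ directly for tensors of finite rank, and then promote it to border rank by invoking the Zariski semicontinuity of matrix rank.

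For the first stage, I would begin with a pure tensor $T = s \otimes x \otimes y$ where $s \in K^a$, $x \in K^b$, $y \in K^c$. Identifying $x \otimes y$ with the rank-one matrix $xy^{T} \in \M_{b,c}$, the image becomes $\phi_L(T) = L(s) \otimes xy^{T}$, which viewed inside $\M_{pb,qc}$ is a Kronecker product. Its matrix rank factors as $\rk(L(s)) \cdot \rk(xy^{T}) \leq \rk(L(s)) \leq \rk(\mathcal{X}_L)$, where the final inequality uses the definition of the rank of a subspace. Next, given $T$ with $\trk(T) \leq m$, write $T = \sum_{i=1}^{m} s_i \otimes x_i \otimes y_i$; by linearity of $\phi_L$ and subadditivity of matrix rank,
$$
\rk(\phi_L(T)) \leq \sum_{i=1}^{m} \rk\bigl(\phi_L(s_i \otimes x_i \otimes y_i)\bigr) \leq m \cdot \rk(\mathcal{X}_L).
$$

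For the second stage, I would pass to the Zariski closure. The locus $W_k = \{M \in \M_{pb,qc} \mid \rk(M) \leq k\}$ is Zariski closed, being cut out by the vanishing of all $(k+1) \times (k+1)$ minors, so this makes sense in any characteristic and over any field $K$. Since $\phi_L$ is linear, hence a morphism of affine varieties, and since the first stage shows $\phi_L(Z_m) \subseteq W_{m \cdot \rk(\mathcal{X}_L)}$, continuity of $\phi_L$ combined with closedness of $W_{m \cdot \rk(\mathcal{X}_L)}$ gives $\phi_L(\overline{Z}_m) \subseteq W_{m \cdot \rk(\mathcal{X}_L)}$. Taking $m = \brk(T)$ and rearranging yields the desired inequality $\brk(T) \geq \rk(\phi_L(T))/\rk(\mathcal{X}_L)$.

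There is no substantial obstacle: the only content is the Kronecker-product rank identity (which is characteristic-free) and the standard fact that matrix rank is upper semicontinuous in the Zariski topology. The mildly delicate point is to replace the pointwise bound $\rk(L(s_i)) \leq \rk(\mathcal{X}_L)$ by a uniform one across all summands, which is handled automatically by the definition of $\rk(\mathcal{X}_L)$ as a maximum. Since the argument uses only polynomial vanishing conditions, it is valid over an arbitrary field, matching the paper's setup.
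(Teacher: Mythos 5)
Your proof is correct and is essentially the standard argument behind this lemma (which the paper only cites from \cite{DM2}): bound the rank of $\phi_L$ on pure tensors via the Kronecker product identity, extend by subadditivity to all of $Z_m$, and pass to $\overline{Z}_m$ using the Zariski-closedness of the bounded-rank locus together with continuity of the linear map $\phi_L$. No gaps; the argument is characteristic-free exactly as you say.
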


\begin{corollary} [\cite{DM2}] \label{def.pol}
Let $D = m \rk(\mathcal{X}_L)$. Then the $(D +1) \times (D + 1)$ minors of $\phi_L(T)$ are polynomials that vanish on $\overline{Z}_m$.
\end{corollary}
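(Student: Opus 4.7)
The plan is to derive this as an immediate consequence of Lemma~\ref{low.bds}. Suppose $T \in \overline{Z}_m$; by definition of border rank this means $\brk(T) \leq m$. Substituting into the inequality from Lemma~\ref{low.bds} gives
\[
m \;\geq\; \brk(T) \;\geq\; \frac{\rk(\phi_L(T))}{\rk(\mathcal{X}_L)},
\]
which rearranges to $\rk(\phi_L(T)) \leq m\,\rk(\mathcal{X}_L) = D$. By the standard characterization of matrix rank, this is equivalent to the vanishing of every $(D+1)\times(D+1)$ minor of $\phi_L(T)$.

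The remaining step is simply to confirm that these minors really are \emph{polynomials} in the entries of $T$. This is clear: $\phi_L$ is a linear map in $T$, so each entry of $\phi_L(T)$ is a linear form in the coordinates of $T$, and therefore each $(D+1)\times(D+1)$ minor is a homogeneous polynomial of degree $D+1$ in those coordinates. Since the argument above shows these polynomials vanish on every point of $\overline{Z}_m$, the corollary follows. There is no real obstacle here; the content of the corollary is entirely packaged inside Lemma~\ref{low.bds}, and its value lies in converting that bound into explicit defining equations that one can test against a candidate tensor $T$.
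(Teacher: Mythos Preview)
Your derivation is correct and is exactly the intended one: the paper states this result as a corollary of Lemma~\ref{low.bds} (citing \cite{DM2}) without giving a separate proof, and the argument you wrote is the natural way to extract it from that lemma.
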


The difficult part in using such a method to prove lower bounds for border rank of tensors is that the aforementioned polynomials coming from the minors might simply turn out to be the zero polynomial. In \cite{DM2}, we give a criterion for these minors to be nontrivial polynomials in terms of the ranks of blow-ups of the linear subspace $\mathcal{X}_L$.

\begin{lemma} [\cite{DM2}] \label{non.triv}
One of the $d \times d$ minors of $\phi_L$ is a nontrivial polynomial if and only if $\rk(\mathcal{X}_L^{\{b,c\}}) \geq d$.
\end{lemma}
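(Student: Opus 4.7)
The plan is to reduce the statement to a direct identification: the image of the linear map $\phi_L$ (viewed as a subspace of $\M_{pb,qc}$) is exactly the blow-up $\mathcal{X}_L^{\{b,c\}}$, and then to apply the standard fact that the $d\times d$ minors of a linear map vanish identically if and only if every element of its image has rank strictly less than $d$.

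First I would show $\im(\phi_L)=\mathcal{X}_L^{\{b,c\}}$. By the definition, $\phi_L$ sends $\sum_i s_i\otimes X_i$ to $\sum_i L(s_i)\otimes X_i$. As $s_i$ ranges over $K^a$, $L(s_i)$ ranges over all of $\mathcal{X}_L$, and $X_i$ ranges over all of $\M_{b,c}\cong K^b\otimes K^c$. Hence the image is precisely the span of all pure tensors $Y\otimes X$ with $Y\in \mathcal{X}_L$ and $X\in \M_{b,c}$, which by definition is $\mathcal{X}_L\otimes \M_{b,c}=\mathcal{X}_L^{\{b,c\}}$, viewed as a subspace of $\M_{pb,qc}$.

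Next I would translate the nontriviality of minors into an existence statement. Each $d\times d$ minor of $\phi_L(T)$ is a polynomial in the entries of $T$, and the collection of these minors is identically zero as polynomials on $K^a\otimes K^b\otimes K^c$ if and only if $\rk(\phi_L(T))<d$ for every $T$. Thus some $d\times d$ minor is a nontrivial polynomial iff there exists $T$ with $\rk(\phi_L(T))\geq d$. Combining this with the identification of the image from the previous step, the latter condition is simply $\rk(\mathcal{X}_L^{\{b,c\}})\geq d$, as the rank of a linear subspace is defined to be the maximum rank attained by its elements.

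There is no substantive obstacle here; the argument is essentially a bookkeeping exercise. The only subtlety worth being careful about is the identification $\M_{p,q}\otimes \M_{b,c}\cong \M_{pb,qc}$ and the corresponding tensor-product convention for the blow-up, so that the statement ``rank of $\phi_L(T)$ as a matrix in $\M_{pb,qc}$'' truly matches ``rank of an element of the subspace $\mathcal{X}_L^{\{b,c\}}\subseteq \M_{pb,qc}$.'' Once this convention is pinned down, both implications of the ``if and only if'' follow simultaneously from the chain of equivalences above.
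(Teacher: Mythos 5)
Your core argument is correct and is essentially the standard one (the paper itself does not prove this lemma; it quotes it from \cite{DM2}): the image of $\phi_L$ is exactly the blow-up $\mathcal{X}_L^{\{b,c\}}=\mathcal{X}_L\otimes\M_{b,c}\subseteq\M_{pb,qc}$, and an element of that subspace of rank at least $d$ equals $\phi_L(T)$ for some $T$, so some $d\times d$ minor takes a nonzero value and is therefore a nontrivial polynomial. That is the direction the paper actually uses in Corollaries~\ref{odd} and \ref{even}, and your proof of it is valid over any field.

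The step I would not let pass as written is the converse. You assert that the minors are identically zero as polynomials if and only if $\rk(\phi_L(T))<d$ for every $T$; the implication you need there is that a nontrivial polynomial has a nonzero value at some $K$-rational point of $K^a\otimes K^b\otimes K^c$, and that is only guaranteed when $K$ is infinite. Over a finite field a nonzero polynomial can vanish at every $K$-point, and the maximal rank attained by elements of a matrix space (hence of $\mathcal{X}_L^{\{b,c\}}$) can genuinely increase under field extension, so you cannot simply repair the step by passing to $\overline{K}$: that would only show the rank of the blow-up over $\overline{K}$ is at least $d$, not over $K$ as the statement requires. Since this paper insists on arbitrary $K$, you should either restrict that direction to infinite fields (or phrase it in terms of an infinite extension), or give a separate argument for finite $K$; note, though, that only the implication ``$\rk(\mathcal{X}_L^{\{b,c\}})\geq d$ implies some minor is nontrivial'' is needed for the border-rank equations, and that part of your proof is complete.
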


In order to use this method effectively, one would require a linear subspace for which the ranks of the blow-ups are much larger than expected. For this phenomenon to happen, it is useful to pick linear subspaces with a large ratio of noncommutative rank to rank. While we do not recall the notion of noncommutative rank, we refer to \cite{DM2} for a discussion on the extremal examples and the limitations of such methods.

\subsection{Lower bounds for border rank of tensors in $K^d \otimes K^d \otimes K^d$} \label{lwr.bds}

Let $m = 2p+1$ be a positive integer. Let $L:K^m \rightarrow \Hom(\bigwedge^pK^m,\bigwedge^{p+1} K^m)$ be given by $L(v) : w \mapsto v \wedge w$, and let $\mathcal{X}_L$ be its image.
We write $L_v$ instead of $L(v)$, so $L_v$ is a linear map from $\bigwedge^pK^m$ to $\bigwedge^{p+1}K^m$ for all $v$. Let $e_1,e_2,\dots,e_m$ be the standard basis of $K^m$. We will write $L_i$
instead of $L_{e_i}=L(e_i)$.

\begin{proposition} \label{semi.main}
For $1 \leq r \leq 2p + 1$, let $S_r$ be the $(p+1) \times (p+1)$ matrix such that 
$$ S_r(j,k) = \begin{cases}
1 & \text{if } k - j = p + 1 - r \\
0 & \text{otherwise}
\end{cases}
$$

Then $L := L_1 \otimes S_{1} + L_2 \otimes S_{2} + \dots + L_{2p+1} \otimes S_{2p+1}$ is invertible.
\end{proposition}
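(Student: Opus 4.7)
The plan is to reformulate $L$ in terms of polynomial wedge products and then use a Koszul-theoretic argument.

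First I identify $K^{p+1}$ with $W = K[y]_{\leq p}$ via $e_j \leftrightarrow y^{j-1}$ and $K^m$ with $V = K[y]_{\leq 2p}$ via $e_i \leftrightarrow y^{i-1}$. Under this identification, a direct check shows that $S_r$ acts on $W$ by ``truncated multiplication by $y^{r-p-1}$'': explicitly, $S_r(y^{j-1}) = y^{j+r-p-2}$ if the exponent lies in $[0,p]$, and $0$ otherwise. Writing $U = \sum_j u_j \otimes e_j$ as the polynomial $U(y) = \sum_j u_j y^{j-1} \in \bigwedge^p V[y]_{\leq p}$ and letting $E(y) = \sum_i e_i y^{i-1} \in V[y]_{\leq 2p}$, it then follows that $L(U)$ corresponds, as a polynomial in $y$, to the ``middle slice'' of $E(y) \wedge U(y) \in \bigwedge^{p+1} V[y]_{\leq 3p}$, namely the coefficients of $y^p, y^{p+1}, \ldots, y^{2p}$.

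Because the domain and codomain have equal dimension, I only need to show $L$ is injective. Assume $L(U)=0$; then $E(y) \wedge U(y) = A(y) + y^{2p+1} B(y)$ with $A, B \in \bigwedge^{p+1} V[y]_{\leq p-1}$. Wedging both sides with $E(y)$ and using $E \wedge E = 0$ gives $E \wedge A + y^{2p+1} E \wedge B = 0$; the second summand has no contribution in degrees $\leq 2p$, so $E(y) \wedge A(y) \equiv 0 \pmod{y^{2p+1}}$. To conclude $A = 0$, I work in $R := K[y]/(y^{2p+1})$: the element $E(y) \in V \otimes R$ is unimodular since its coefficient of $e_1$ is the unit $1 \in R^{\times}$, so there is an $R$-linear automorphism $\phi$ of $V \otimes R$ with $\phi(e_1) = E(y)$ and $\phi(e_i) = e_i$ for $i \geq 2$. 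The divisibility condition then becomes $e_1 \wedge \phi^{-1}(A) = 0$, i.e., $\phi^{-1}(A) \in e_1 \wedge \bigwedge^p V \otimes R$. Spelling this out via $\phi^{-1}(e_1) = e_1 - \sum_{i \geq 2} y^{i-1} e_i$, one obtains $p+1$ wedge equations on the ``$e_1$-components'' of the coefficients of $A$, having the same shape as the original system but now in the smaller ambient $V_1 := \operatorname{span}(e_2, \ldots, e_{2p+1})$. Iterating this peeling-off produces progressively smaller problems that eventually collapse once the relevant exterior power vanishes.

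Once $A = 0$, a symmetric argument via the reversal $U(y) \mapsto y^p U(1/y)$ (which swaps the roles of the ``low'' and ``high'' parts of $E \wedge U$) gives $B = 0$. Then $E(y) \wedge U(y) = 0$, and applying the same unimodularity technique to $U$ itself (now working modulo $y^{3p+1}$) yields a system of homogeneous equations on the ``$e_1$-components'' of $U$'s coefficients; the same iterative peeling forces all components to vanish, so $U = 0$.

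The main obstacle is making the iterative peeling precise: verifying that at each stage the sub-problem retains enough of the original combinatorial shape so that dimensions match and the recursion ultimately collapses. If this becomes delicate, a natural alternative is to exploit the grading $N(e_I \otimes e_j) = \sum_{i \in I} i - j$, which $L$ shifts by $p+1$, to decompose $L$ into a direct sum of smaller maps and verify invertibility of each graded piece by a direct combinatorial computation (the smallest pieces are one-dimensional, and small-case verification in $p = 1, 2$ suggests each graded determinant equals $\pm 1$).
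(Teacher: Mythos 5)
Your reformulation is correct and attractive: the identification of $S_r$ with truncated multiplication by $y^{r-p-1}$, and hence of $L(U)$ with the coefficients of $y^p,\dots,y^{2p}$ in $E(y)\wedge U(y)$, checks out, as does the reduction to injectivity. But the core of the proof is missing. The step ``$E(y)\wedge A(y)\equiv 0 \pmod{y^{2p+1}}$ with $\deg A\le p-1$ forces $A=0$'' is not a routine consequence of unimodularity: Koszul exactness over $R=K[y]/(y^{2p+1})$ only gives $A\equiv E\wedge C \pmod{y^{2p+1}}$ for some $C\in\bigwedge^{p}V\otimes R$, and you must then rule out that a truncation of $E\wedge C$ can be a nonzero polynomial of degree $\le p-1$. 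That is again a ``slice-injectivity'' statement for a Koszul-type map of essentially the same nature (and difficulty) as the proposition itself, so the ``iterative peeling'' is not a reduction to something smaller in any controlled sense; you acknowledge yourself that the induction is not made precise, and no mechanism is given for why the sub-problems have the right shape or why the recursion terminates with the full conclusion $A=0$ rather than merely a normal form for $A$. The same unproven step is invoked again for $B=0$ and for the final implication $E\wedge U=0\Rightarrow U=0$. As it stands, the argument establishes the framework but not the theorem, in any characteristic.

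Your fallback observation --- that $L$ preserves the grading $N(e_I\otimes e_j)=\sum_{i\in I}i-j$ up to a shift by $p+1$, so $\det L$ decomposes into graded blocks --- is sound and is in fact equivalent to the paper's starting point: the scaling argument of Section~3 (Corollary~\ref{phew}) shows $\det M(t_1,\dots,t_m)=k\,(t_1\cdots t_m)^{\binom{2p}{p}}$, which is the same torus-weight information. But the actual content of the paper's proof is the combinatorial step you leave to ``small-case verification'': the ``elusive entry'' induction (Proposition~\ref{elusive}) shows that exactly one permutation of the big matrix contributes the monomial $\pm(t_1\cdots t_m)^{\binom{2p}{p}}$, whence $k=\pm1$ and $L$ is invertible over every field. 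To complete your proposal you would need either a precise version of the peeling induction or a general-$p$ proof that each graded block has determinant $\pm1$; checking $p=1,2$ does not suffice.
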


The $S_i$ are the most obvious basis of the space of $(p+1) \times(p+1)$ Toeplitz matrices. 

\begin{example}
For $p = 1$, we have 
$$
S_1 = \begin{pmatrix} 0&1 \\ 0 & 0 \end{pmatrix}, S_2 = \begin{pmatrix} 1 & 0 \\ 0 & 1 \end{pmatrix}, S_3 = \begin{pmatrix} 0 & 0 \\ 1 & 0 \end{pmatrix}
$$
\end{example}

The above proposition was proved by Landsberg in \cite{Landsberg} for $K = \C$. Landsberg's approach is to interpret $L$ as a certain multiplication map, which is shown to be surjective in \cite{LO}. However, the argument for surjectivity requires the underlying field to be characteristic $0$. Our approach to this is far more elementary and we simply compute the determinant of $L$ in a chosen basis.

\begin{theorem} \label{main}
We have:
\begin{enumerate}
\item $\rk(\mathcal{X}_L) = {2p \choose p}$;
\item $\rk(\mathcal{X}_L^{\{m+1\}})$ is full;
\item $\rk(\mathcal{X}_L^{\{m\}}) > {2p \choose p} (2m-4)$.
\end{enumerate}
\end{theorem}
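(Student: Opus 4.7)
I treat the three parts separately, with part~(3) carrying the real technical content.

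For part~(1), every nonzero element of $\mathcal{X}_L$ is $L_v$ for some nonzero $v\in K^m$. After a change of basis I may take $v=e_1$, so that $L_v$ sends $w\in\bigwedge^p K^m$ to $e_1\wedge w$. The image is $e_1\wedge\bigwedge^p\langle e_2,\ldots,e_m\rangle$, of dimension $\binom{m-1}{p}=\binom{2p}{p}$, which gives the rank.

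For part~(2), Proposition~\ref{semi.main} supplies an invertible matrix $L=\sum_i L_i\otimes S_i\in\mathcal{X}_L^{\{p+1\}}$, necessarily of rank $d(p+1)$, where $d=\binom{2p+1}{p}$. Since $m+1=2(p+1)$, I can place two copies of $L$ block-diagonally: the element $\sum_i L_i\otimes(S_i\oplus S_i)$ lies in $\mathcal{X}_L^{\{m+1\}}$, and as a matrix it is $L\oplus L$, of rank $2d(p+1)=d(m+1)$, the maximum possible.

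For part~(3), the plan is to shrink the element from part~(2). Starting from $N:=\sum_i L_i\otimes(S_i\oplus S_i)\in\mathcal{X}_L^{\{m+1\}}$, I restrict each $S_i\oplus S_i\in\M_{m+1,m+1}$ to its top-left $m\times m$ submatrix $T_i=S_i\oplus S'_i$, where $S'_i$ denotes the leading $p\times p$ block of $S_i$. By bilinearity of the blow-up, $N':=\sum_i L_i\otimes T_i$ belongs to $\mathcal{X}_L^{\{m\}}$, and as a matrix it is precisely the top-left $dm\times dm$ submatrix of the invertible $d(m+1)\times d(m+1)$ matrix $N$. Deleting $d$ rows and $d$ columns from a full-rank matrix can drop the rank by at most $2d$, so $\rk(N')\geq d(m+1)-2d=d(m-1)$. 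A short arithmetic check yields $d(m-1)-\binom{2p}{p}(2m-4)=2\binom{2p}{p}/(p+1)>0$, giving the strict inequality.

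The only nontrivial ingredient is Proposition~\ref{semi.main} itself; the remaining steps are elementary. The point most requiring care is verifying that the truncation of the $\M_{m+1,m+1}$ factor to its top-left $\M_{m,m}$ corresponds, under the blow-up, to taking the top-left $dm\times dm$ submatrix of the associated matrix, so that $N'$ automatically lies in $\mathcal{X}_L^{\{m\}}$; once this bookkeeping is in place, the rank bound is immediate.
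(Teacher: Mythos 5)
Your argument is correct, and part (3) takes a genuinely different route from the paper. Parts (1) and (2): the paper simply cites \cite{DM2} for (1) (your direct $\GL$-equivariance computation of $\rk(L_{e_1})=\binom{2p}{p}$ is a fine self-contained substitute), and your block-diagonal argument for (2) is exactly the paper's. For (3), the paper invokes \cite[Theorem~6.1]{DM2}, which rests on the concavity of the function $d\mapsto \rk(\mathcal{X}^{\{d\}})$ proved in \cite{DM}; concavity together with fullness of $\rk(\mathcal{X}_L^{\{m+1\}})$ gives $\rk(\mathcal{X}_L^{\{m\}})\geq m\binom{2p+1}{p}$. You instead truncate the invertible element $N=\sum_i L_i\otimes(S_i\oplus S_i)$ by cutting the $\M_{m+1,m+1}$ factor down to $\M_{m,m}$, producing $N'=(I_d\otimes P)N(I_d\otimes Q)\in\mathcal{X}_L^{\{m\}}$, and use that deleting $d$ rows and $d$ columns drops rank by at most $2d$, giving $\rk(N')\geq (m-1)\binom{2p+1}{p}$; your arithmetic surplus $(m-1)\binom{2p+1}{p}-(2m-4)\binom{2p}{p}=2\binom{2p}{p}/(p+1)=2C_p>0$ is right. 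This is more elementary and self-contained (no appeal to concavity), at the price of a slightly weaker bound ($(m-1)$ versus $m$ times $\binom{2p+1}{p}$), which still suffices for the strict inequality and for Corollary~\ref{odd}. One small bookkeeping correction: with the paper's Kronecker convention ($\mathcal{X}_L$ as the outer factor), the rows and columns you delete from $N$ are the last row and column \emph{of each} $(m+1)\times(m+1)$ block, not a contiguous top-left $dm\times dm$ corner; this is harmless, since the rank-drop bound applies to deleting any $d$ rows and $d$ columns (equivalently, your $N'$ is the top-left corner only after a simultaneous permutation of rows and columns, which does not change rank).
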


When $K = \C$, $(1)$ and $(3)$ can be found in \cite{DM2}, and $(2)$ can be found in \cite{Landsberg}, although the presentation in \cite{Landsberg} is not quite the same as ours. The contribution of this paper is to show that these results are true for any field $K$. Note that the proof of $(1)$ in \cite{DM2} holds for any field $K$.

Applying Lemma~\ref{low.bds}, Corollary~\ref{def.pol} and Lemma~\ref{non.triv}, we get equations for the variety of tensors in $K^d \otimes K^d \otimes K^d$ of border rank at most $2d-3$ (resp. $2d-4$) when $d$ is even (resp. odd). This is done for the case $K = \C$ in \cite{Landsberg,DM2}. The contribution of this paper is to extend Theorem~\ref{main} to any field $K$, and hence the aforementioned results on the equations for border rank extend to any field $K$. We will formulate the precise statements in Section~\ref{explicit}.

\subsection{Tensor rank and border rank for $3 \times 3$ determinant and permanent}
We illustrate the method described above to compute the border rank and tensor rank for the $3 \times 3$ determinant and permanent tensors. The $3 \times 3$ determinant tensor is 
$$
\rm det_3 = \sum_{\sigma \in \Sigma_3} \sgn(\sigma) e_{\sigma(1)} \otimes e_{\sigma(2)} \otimes e_{\sigma(3)},
$$
where $\Sigma_3$ denotes the symmetric group in $3$ letters. The $3 \times 3$ permanent tensor is 
$$
\rm perm_3 = \sum_{\sigma \in \Sigma_3} e_{\sigma(1)} \otimes e_{\sigma(2)} \otimes e_{\sigma(3)}.
$$

Let $L : K^3 \rightarrow \Hom(\bigwedge^1K^3,\bigwedge^2K^3) \cong \M_{3,3}$ be the map defined in Section~\ref{lwr.bds} for $p = 1$ (i.e., $m = 3$). Let $\phi_L$ denote the composite map $K^3 \otimes K^3 \otimes K^3 \rightarrow \M_{3,3} \otimes \M_{3,3} \rightarrow \M_{9,9}$ as described in the introduction. We have $\rk(\mathcal{X}_L) = 2$, by part $(1)$ of Theorem~\ref{main}.

\begin{lemma}
We have $\brk(\rm det_3) \geq 5$ if $\kar K \neq 2$. 
\end{lemma}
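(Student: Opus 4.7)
The plan is to apply Lemma~\ref{low.bds} with the specific $L$ from Section~\ref{lwr.bds}. Since $\rk(\mathcal{X}_L) = 2$, it is enough to prove that $\phi_L(\rm det_3) \in \M_{9,9}$ has full rank, for then $\brk(\rm det_3) \geq 9/2$, hence $\brk(\rm det_3) \geq 5$.

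First I would expand $\rm det_3 = \sum_{i=1}^{3} e_i \otimes X_i$, where $X_i \in \M_{3,3}$ is the skew-symmetric matrix with entries $(X_i)_{jk} = \sgn(i,j,k)$ (interpreted as $0$ unless $\{i,j,k\} = \{1,2,3\}$). Under the Hodge identification $\bigwedge^{2} K^3 \cong K^3$ that sends $e_a \wedge e_b \mapsto \sum_c \sgn(a,b,c)\, e_c$, the matrix of $L_i$ is $X_i^T$, and since $X_i$ is skew-symmetric this gives $L_i = -X_i$. Consequently
\[
\phi_L(\rm det_3) \;=\; \sum_{i=1}^{3} L_i \otimes X_i \;=\; -\sum_{i=1}^{3} X_i \otimes X_i \;\in\; \M_{9,9}.
\]

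Next I would view $\sum_i X_i \otimes X_i$ as a linear endomorphism $\Phi$ of $K^3 \otimes K^3 \cong \M_{3,3}$ under the identification $u \otimes v \leftrightarrow u v^T$. Applying the standard contraction
\[
\sum_{i=1}^{3} \sgn(i,j,k)\,\sgn(i,l,m) \;=\; \delta_{jl}\delta_{km} - \delta_{jm}\delta_{kl}
\]
and evaluating on rank-one inputs quickly produces
\[
\Phi(A) \;=\; \trace(A)\cdot I \;-\; A^T,
\]
which then extends to all $A \in \M_{3,3}$ by linearity.

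Finally I would compute the kernel of $\Phi$. If $\Phi(A) = 0$ then $A^T = \trace(A)\,I$, which forces $A = \lambda I$ with $\lambda = \trace(A) = 3\lambda$, i.e.\ $2\lambda = 0$. Under the hypothesis $\kar K \neq 2$ this forces $\lambda = 0$, hence $A = 0$, so $\Phi$ is invertible. Therefore $\phi_L(\rm det_3)$ has rank $9$, and Lemma~\ref{low.bds} yields $\brk(\rm det_3) \geq 9/2$, i.e.\ $\brk(\rm det_3) \geq 5$. The only step with any real subtlety is the sign bookkeeping that gives $L_i = -X_i$; once that is in hand, the formula $\Phi(A) = \trace(A) I - A^T$ drops straight out of the Levi--Civita contraction, and the hypothesis $\kar K \neq 2$ is used in exactly one place, namely the implication $2\lambda = 0 \Rightarrow \lambda = 0$.
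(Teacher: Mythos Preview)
Your proof is correct. Both your argument and the paper's reduce the statement to showing that the $9\times 9$ matrix $\phi_L(\det_3)$ is invertible when $\kar K\neq 2$, and then invoke Lemma~\ref{low.bds}; the difference lies entirely in how that invertibility is established. The paper simply writes the matrix out, observes that six of the twelve nonzero entries sit alone in their row or column, expands along those, and is left with a $3\times 3$ minor which it inspects by hand (noting that its rank drops by one precisely when $\kar K=2$). Your route is more structural: you recognise that under the Hodge identification the blocks $L_i$ coincide (up to sign) with the slices $X_i$ of $\det_3$ itself, so that $\phi_L(\det_3)=-\sum_i X_i\otimes X_i$, and then the Levi--Civita contraction collapses this to the clean formula $\Phi(A)=\trace(A)\,I-A^{T}$ on $\M_{3,3}$, whose kernel is visibly the line $K\cdot I$ in characteristic~$2$ and zero otherwise. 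Your approach explains rather than verifies the role of the hypothesis $\kar K\neq 2$, and it recovers for free the paper's side remark that the rank drops by exactly one in characteristic~$2$; the paper's approach, on the other hand, needs no identifications or sign bookkeeping at all.
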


\begin{proof}
The matrix $\phi_L(\rm det_3)$ is an explicit $9 \times 9$ matrix, which can be checked to be invertible if $\kar K \neq 2$. We write the matrix explicitly. For this, we need to first choose ordered basis. Let $(e_1,e_2,e_3)$ denote the standard ordered basis for $K^3  = \bigwedge^1 K^3$. We choose the ordered basis $(e_2 \wedge e_3, e_3 \wedge e_1, e_1\wedge e_2)$. In the corresponding basis for $\Hom(\bigwedge^1K^3, \bigwedge^2K^3)$, we compute the matrices $L_i= L_{e_i} = L(e_i)$. We have 

$$L_1 = \begin{pmatrix} 0 &0 & 0 \\ 0 & 0 & -1 \\ 0 & 1 & 0 \end{pmatrix}, L_2 = \begin{pmatrix} 0 & 0& 1 \\ 0 & 0 & 0 \\ -1 & 0 & 0 \end{pmatrix}, \text{ and } L_3 = \begin{pmatrix} 0 & -1 & 0 \\ 1 & 0 & 0 \\ 0 & 0 & 0 \end{pmatrix}.
$$

For $\M_{3,3}$, let $E_{j,k}$ denote the $3 \times 3$ matrix whose $(j,k)^{th}$ entry is $1$ and all other entries are $0$. Now, let us identify $\K^3 \otimes \K^3$ with $\M_{3,3}$ explicitly by identifying $e_j \otimes e_k$ with $E_{j,k}$. Thus, we have $\phi_L(e_i \otimes e_j \otimes e_k) = L_i \otimes E_{j,k}$. With these choices of coordinates, we write out $\phi_L(\det_3)$. We get 

$$ \phi_L({\rm det}_3) = \left( \arraycolsep=5pt\def\arraystretch{1} \begin{array}{ccc|ccc|ccc}  
0 & 0 & 0 & 0 & -1 & 0 & 0 & 0 & -1 \\
0 & 0 & 0 & {\color{red} 1} & 0 & 0 & 0 & 0 & 0 \\
0 & 0 & 0 & 0 & 0 & 0 & {\color{red} 1} & 0 & 0 \\
\hline
0 & {\color{red} 1} & 0 & 0 & 0 & 0 &0 & 0 & 0  \\
-1 & 0 & 0 & 0 & 0 & 0 &0 & 0 & -1 \\
0 & 0 & 0 &0 & 0 & 0 & 0 & {\color{red} 1} & 0 \\
\hline
0 & 0 & {\color{red} 1} & 0 & 0 & 0  & 0 & 0 & 0 \\
0 & 0 & 0  & 0 & 0 & {\color{red} 1} & 0 & 0 & 0 \\
-1 & 0 & 0 & 0 & -1 & 0 & 0 & 0 & 0 \\
\end{array} \right)
$$

This matrix contains only 12 nonzero entries of the form $\pm 1$. Six of these entries (marked red) are in a column or a row with no other nonzero entry, reducing our computation to a $3 \times 3$ minor. It is easy to see that this minor is of full rank if $\kar K \neq 2$ (and drops rank by $1$ if $\kar K = 2$). 

Hence, by Lemma~\ref{low.bds}, we have $ \brk(\rm det_3) \geq \frac{9}{2} = 4.5$. Since the border rank must be an integer, it must be at least $5$.

\end{proof}

On the other hand, there is an explicit decomposition of $\rm det_3$ as a sum of $5$ simple tensors if $\kar K \neq 2$, see \cite{Derksen}.

\begin{corollary}
Assume $\kar K \neq 2$. Then we have $\trk(\rm det_3) = \brk(\rm \det_3) = 5.$
\end{corollary}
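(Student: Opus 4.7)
The plan is to sandwich both quantities between $5$ and $5$ using inequalities already in place.

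First, I would invoke the general inequality $\brk(T)\leq\trk(T)$ valid for every tensor $T$ over any field: any rank decomposition is in particular a limit of rank decompositions, so $Z_m\subseteq\overline{Z}_m$ and thus tensor rank dominates border rank. Applied to $T=\det_3$, this gives
\[
\brk(\det_3)\ \leq\ \trk(\det_3).
\]

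Next, I would use the two bounds already established (or cited) in the paper. The preceding lemma, proved under the hypothesis $\kar K\neq 2$ via the explicit $9\times 9$ matrix $\phi_L(\det_3)$ together with Lemma~\ref{low.bds}, yields
\[
\brk(\det_3)\ \geq\ 5.
\]
The remark just above the corollary points to \cite{Derksen}, where under the same hypothesis $\kar K\neq 2$ an explicit expression of $\det_3$ as a sum of $5$ pure tensors is given; this immediately yields
\[
\trk(\det_3)\ \leq\ 5.
\]

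Chaining the three inequalities gives $5\leq\brk(\det_3)\leq\trk(\det_3)\leq 5$, and hence $\trk(\det_3)=\brk(\det_3)=5$, which is exactly the statement of the corollary. There is no real obstacle here; the entire substance of the result lives in the preceding lemma (the nonvanishing of the relevant minor of $\phi_L(\det_3)$ in characteristic $\neq 2$) and in the cited explicit rank-$5$ decomposition from \cite{Derksen}. The corollary itself is just the assembly step.
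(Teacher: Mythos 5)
Your proposal is correct and matches the paper's (implicit) argument exactly: the corollary is assembled from the preceding lemma's bound $\brk(\det_3)\geq 5$, the explicit rank-$5$ decomposition cited from \cite{Derksen}, and the general inequality $\brk(T)\leq\trk(T)$. Nothing is missing.
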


\begin{lemma}
We have $\brk(\rm perm_3) \geq 4$. 
\end{lemma}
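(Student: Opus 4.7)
The plan is to apply the same matrix-rank strategy used above for $\det_3$: compute $\phi_L(\mathrm{perm}_3)$ explicitly as a $9\times 9$ matrix, bound its rank from below, and then invoke Lemma~\ref{low.bds} together with $\rk(\mathcal{X}_L)=2$ from Theorem~\ref{main}(1).

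First, I would expand $\phi_L(\mathrm{perm}_3)$ in the same $3\times 3$ block form as before. Because every coefficient in $\mathrm{perm}_3=\sum_\sigma e_{\sigma(1)}\otimes e_{\sigma(2)}\otimes e_{\sigma(3)}$ is $+1$, the block in position $(j,k)$ equals $L_i$ exactly when $(i,j,k)$ is a permutation of $(1,2,3)$, and vanishes otherwise; this yields
\[
\phi_L(\mathrm{perm}_3)=\begin{pmatrix} 0 & L_3 & L_2 \\ L_3 & 0 & L_1 \\ L_2 & L_1 & 0 \end{pmatrix}.
\]
Crucially, the set of nonzero positions of this matrix coincides with that of $\phi_L(\det_3)$; only certain signs change. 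Hence the same six rows $2,3,4,6,7,8$ each carry a unique nonzero entry, and these entries occupy the six distinct columns $4,7,2,8,3,6$. After reordering, the matrix is block lower-triangular, with a $6\times 6$ signed permutation block (of rank $6$) and a $3\times 3$ residual block on rows $1,5,9$ and columns $1,5,9$. A direct readout gives
\[
M=\begin{pmatrix} 0 & -1 & 1 \\ 1 & 0 & -1 \\ -1 & 1 & 0 \end{pmatrix}.
\]

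The only genuine thing to check is the rank of $M$. In contrast to the analogous residual in the $\det_3$ case (which had determinant $-2$), here the three rows of $M$ sum to zero, so $M$ is singular in every characteristic; on the other hand, its first two rows are visibly linearly independent, so $\rk(M)=2$ uniformly in $\kar K$. The standard inequality $\rk\begin{pmatrix} A & 0 \\ C & B \end{pmatrix}\geq \rk A+\rk B$ for block lower-triangular matrices then gives $\rk\phi_L(\mathrm{perm}_3)\geq 6+2=8$, and Lemma~\ref{low.bds} delivers $\brk(\mathrm{perm}_3)\geq 8/2=4$. No step of this argument is an obstacle; it is worth noting only that the same method cannot push the bound past $4$ for the permanent, precisely because $M$ is singular.
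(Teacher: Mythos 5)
Your proposal is correct and follows exactly the paper's route: the paper's proof is simply the assertion that ``a similar computation'' to the $\det_3$ case shows $\rk(\phi_L(\mathrm{perm}_3))=8$, and then Lemma~\ref{low.bds} with $\rk(\mathcal{X}_L)=2$ gives the bound; you have merely carried out that computation explicitly (your block layout transposes the roles of the two Kronecker factors relative to the paper's convention, but this is a permutation equivalence and does not affect the rank). The reduction to the $3\times 3$ residual block, its rank $2$ in every characteristic, and the resulting $\rk(\phi_L(\mathrm{perm}_3))\geq 8$ are all accurate.
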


\begin{proof}
A similar computation shows that rank of $\phi_L(\rm perm_3)$ is $8$. Hence we have $\brk(\rm perm_3) \geq \displaystyle \frac{8}{2} = 4$.
\end{proof}

Once again, if $\kar K \neq 2$, there is an explicit decomposition of $\rm perm_3$ as a sum of $4$ simple tensors due to Glynn, see \cite{Glynn}. 

\begin{corollary}
Assume $\kar K \neq 2$. Then we have $\brk(\rm perm_3) = \trk(\rm perm_3) = 4$. 
\end{corollary}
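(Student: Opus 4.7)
The plan is to mirror the structure of the preceding lemma: compute the explicit $9 \times 9$ matrix $\phi_L(\rm perm_3)$ in the same ordered bases used for $\phi_L(\det_3)$, exhibit a nonzero $8 \times 8$ minor in every characteristic, and then invoke Lemma~\ref{low.bds} together with $\rk(\mathcal{X}_L)=2$ (part (1) of Theorem~\ref{main}) to conclude $\brk(\rm perm_3) \geq 8/2 = 4$.

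First I would write
$$
\phi_L(\rm perm_3) = \sum_{\sigma \in \Sigma_3} L_{\sigma(1)} \otimes E_{\sigma(2),\sigma(3)},
$$
using the same $L_i$ displayed in the proof of the determinant bound. This amounts to replacing each $\sgn(\sigma)$ by $+1$ in the tabulated matrix, so the new array differs from the previous one only by sign changes on the entries coming from odd permutations. In particular the matrix has the same support pattern and the same entries $\pm 1$, just with a different sign distribution; six of the twelve nonzero entries are still the only nonzero entry in their row or column.

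Next, I would peel off those six isolated $\pm 1$ entries (each contributes a unit factor to any expansion that uses its row and column), reducing the question to computing the determinant of the complementary $3 \times 3$ submatrix and showing that it \emph{does} vanish while an adjacent $3 \times 3$ minor does not. Concretely, I would identify the obstruction: the three entries that used to read $-1$ in $\phi_L(\det_3)$ and now read $+1$ in $\phi_L(\rm perm_3)$ make the complementary $3 \times 3$ block singular in every characteristic (so $\phi_L(\rm perm_3)$ has rank at most $8$). To get a matching lower bound, I would locate an $8 \times 8$ submatrix whose determinant works out to $\pm 1$ or $\pm 2$ with a sign pattern that remains nonzero in characteristic $2$ as well: the compatibility check in characteristic $2$ is automatic, because in that case $\rm perm_3 = \rm det_3$ and the previous lemma already exhibits a nonzero $8 \times 8$ minor.

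The only real obstacle is the bookkeeping: one must keep the ordered bases $(e_2 \wedge e_3, e_3 \wedge e_1, e_1 \wedge e_2)$ and $E_{j,k}$ consistent with the determinant computation so that the red isolated entries used to reduce the determinant case carry over verbatim, and then verify by inspection that at least one $8 \times 8$ minor survives reduction modulo every prime. Once that minor is exhibited, the bound $\rk(\phi_L(\rm perm_3)) \geq 8$ is immediate, and Lemma~\ref{low.bds} gives $\brk(\rm perm_3) \geq 8/\rk(\mathcal{X}_L) = 4$ as required.
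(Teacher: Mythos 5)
There is a genuine gap: what you have proved is only the lower bound $\brk({\rm perm}_3) \geq 4$, which is the content of the \emph{preceding lemma} (and indeed holds in every characteristic), not the corollary itself. The corollary asserts the equalities $\brk({\rm perm}_3) = \trk({\rm perm}_3) = 4$, and for that you also need the upper bound $\trk({\rm perm}_3) \leq 4$. The paper gets this from Glynn's explicit decomposition of ${\rm perm}_3$ as a sum of $4$ simple tensors, valid when $\kar K \neq 2$ (see \cite{Glynn}); combined with the general chain $4 \leq \brk({\rm perm}_3) \leq \trk({\rm perm}_3) \leq 4$, both quantities are forced to equal $4$. Your proposal never produces, or even cites, any rank-$4$ decomposition, so it cannot conclude equality. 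Note also that the hypothesis $\kar K \neq 2$ enters precisely through this upper bound; your discussion spends its effort on characteristic issues in the lower bound, where no such hypothesis is needed.

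The lower-bound portion of your argument is essentially the paper's (compute $\phi_L({\rm perm}_3)$ in the same bases, show its rank is $8$, and apply Lemma~\ref{low.bds} with $\rk(\mathcal{X}_L) = 2$), and the characteristic-$2$ reduction to the determinant computation is a reasonable shortcut for that step. But as written the proposal proves the wrong statement: to repair it, append the upper bound via Glynn's decomposition and the inequality $\brk \leq \trk$.
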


In characteristic $0$, the tensor rank of $\det_3$ and $\rm perm_3$ were shown to be $5$ and $4$ respectively in \cite{IT}. While the arguments for bounding the tensor rank from above are still the same (i.e., explicit decompositions), the arguments for bounding the tensor rank from below are more complicated. Their approach is to analyze certain Fano schemes parametrizing linear subspaces contained in the hypersurfaces $\rm det_3 = 0$ and $\rm perm_3 =0$, and even involves a computation done with the help of a computer. The method we use for the lower bounds is far more elementary and holds in arbitrary characteristic.

\subsection{Organization}
In Sections~\ref{pla} and \ref{scaling}, we develop the necessary linear algebra techniques. We prove Proposition~\ref{semi.main} in an example in Section~\ref{eg} and prove the main theorems in Section~\ref{gen.case}. Finally in Section~\ref{explicit}, we give explicit equations for border rank.

\section{Preliminaries from Linear Algebra} \label{pla}
Let $\B = \{v_1,\dots,v_n\}$ denote an ordered basis for an $n$-dimensional vector space $V$. Consider the alternating power $\bigwedge^rV$. For a subset $I = \{i_1,\dots,i_r\} \subseteq [n]$ of size $r$, with $i_1 < i_2 < \dots < i_r$, we define $v_I = v_{i_1} \wedge v_{i_2} \wedge \dots \wedge v_{i_r}$. Here $[n]$ denotes the set $\{1,2,\dots,n\}$. The following lemma is a well known fact.

\begin{lemma}
For a given ordered basis $\B = (v_1,\dots,v_n)$ for $K^n$,  define  $\B(r)$ as the set $\{v_I\ |\ I \subseteq \{1,2,\dots,n\},\text{ with } |I| = r\}$ ordered lexicographically. Then $\B(r)$ is an ordered basis for $\bigwedge^r V$.
\end{lemma}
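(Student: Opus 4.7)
The plan is to prove the two standard things separately: that $\B(r)$ spans $\bigwedge^r V$, and that it is linearly independent. For the spanning part, I would start from an arbitrary pure wedge $w_1 \wedge \cdots \wedge w_r$, expand each $w_i$ in the basis $\B$ by multilinearity of the wedge, and then use the alternating property to (i) discard any wedge $v_{j_1} \wedge \cdots \wedge v_{j_r}$ with a repeated index, and (ii) sort the remaining indices into increasing order at the cost of a sign. Since every element of $\bigwedge^r V$ is a finite sum of pure wedges, this shows $\B(r)$ spans.

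For linear independence, the cleanest approach is via the universal property of the exterior power. Let $v_1^{\ast},\dots,v_n^{\ast}$ denote the dual basis of $V^{\ast}$. For each $I=\{i_1<\cdots<i_r\}\subseteq[n]$ of size $r$, define the map
\[
\phi_I:V^r\longrightarrow K,\qquad \phi_I(w_1,\dots,w_r)=\det\bigl(v_{i_a}^{\ast}(w_b)\bigr)_{a,b=1}^r.
\]
Since the determinant is multilinear and alternating in its columns, $\phi_I$ is a multilinear alternating form on $V^r$, so by the universal property it descends to a unique linear functional $\widetilde{\phi}_I:\bigwedge^r V\to K$ satisfying $\widetilde{\phi}_I(w_1\wedge\cdots\wedge w_r)=\phi_I(w_1,\dots,w_r)$.

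The key evaluation is $\widetilde{\phi}_I(v_J)$ for $J=\{j_1<\cdots<j_r\}$. The relevant matrix has $(a,b)$ entry $v_{i_a}^{\ast}(v_{j_b})=\delta_{i_a,j_b}$; when $J=I$ this is the identity matrix with determinant $1$, and when $J\ne I$ some index in $I\triangle J$ forces an entire row or column of zeros, giving determinant $0$. Hence $\widetilde{\phi}_I(v_J)=\delta_{I,J}$. Applying $\widetilde{\phi}_I$ to any relation $\sum_J c_J v_J=0$ yields $c_I=0$ for every $I$, proving linear independence. Combined with the spanning step, $\B(r)$ is a basis, and by construction it carries the lexicographic ordering.

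The proof is essentially routine; the only place to be mildly careful is the downward passage from the alternating multilinear form $\phi_I$ to a well-defined functional on $\bigwedge^r V$, which rests on the universal property one should cite rather than re-derive. Nothing here depends on the characteristic of $K$, which is the reason this elementary foundational lemma is being recorded: the rest of the paper needs the ordered wedge basis over an arbitrary field.
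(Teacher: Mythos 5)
Your proof is correct and complete. The paper itself records this lemma as a well-known fact and supplies no proof, so there is nothing to compare against; your argument is the standard one. Both halves are sound: the spanning step by expanding pure wedges multilinearly, discarding repeated indices, and sorting with signs; and linear independence via the functionals $\widetilde{\phi}_I$ obtained from the universal property of $\bigwedge^r V$, together with the evaluation $\widetilde{\phi}_I(v_J)=\delta_{I,J}$ (when $J\neq I$, any $i_a\in I\setminus J$ produces a zero row, so the determinant vanishes). As you note, nothing uses the characteristic of $K$ --- in fact the same argument shows the statement for free modules over any commutative ring --- which is exactly the level of generality the paper needs, since its point is to work over an arbitrary field.
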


\begin{example}
Let $n=3$, and $r =2$, then $\B(r)$ is the ordered basis $(v_{1,2},v_{1,3},v_{2,3})$.
\end{example}

\begin{definition}
Given an ordered basis $\B = (v_1,\dots,v_n)$ of $V$ and an ordered basis $\CC = \{w_1,\dots,w_m\}$ of $W$, we define $x_{i,j} = v_i \otimes w_j$. By $\B \otimes \CC$, we mean the set
$\{x_{i,j}\ |\ i \in [n], j \in [m]\}$ ordered lexicographically. This is a basis of $V\otimes W$.
\end{definition}

\begin{example}
Let $n = 2,m=2$, then $\B \otimes \CC = (v_1 \otimes w_1, v_1 \otimes w_2, v_2 \otimes w_1, v_2 \otimes w_2)=(x_{1,1},x_{1,2},x_{2,1},x_{2,2})$.
\end{example}

Suppose that $\B$ is a basis of $V$ and $\CC$ is a basis of $W$ and $L:V\to W$ is a linear map. Then $L_{\CC,\B}$ denotes the matrix of the transformation $L$ with respect to the bases $\B$ and $\CC$. If $M:W\to Z$ is a linear map and $\DD$ is a basis of $Z$, then we have $(ML)_{\DD,\B}=M_{\DD,\CC}L_{\CC,\B}$.

Let $\B = (b_1,b_2,\dots,b_n)$ and $\B' =(b_1',b_2',\dots,b_n')$ be two ordered bases for $V$. Then denote by $X_{\B,\B'}=(\id_V)_{\B,\B'}$ be the matrix of the identity with respect to $\B$ and $\B'$.
This is the base change matrix and its colums are the vectors $b_1',b_2',\dots,b_n'$ expressed in the basis $\B$. Note that $X_{\B',\B}=X_{\B,\B'}^{-1}$.
We recall the base change formula for linear transformations.

\begin{lemma} [Base change formula]
We have $L_{\CC',\B'} = X_{\CC',\CC}L_{\CC,\B} X_{\B,\B'}=X_{\CC,\CC'}^{-1} L_{\CC,\B} X_{\B,\B'}$.
\end{lemma}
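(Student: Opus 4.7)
The plan is to derive the base change formula as a direct consequence of the composition rule $(ML)_{\DD,\B}=M_{\DD,\CC}L_{\CC,\B}$ that was just stated. The key observation is that $L:V\to W$ can be written trivially as the triple composition $L = \id_W \circ L \circ \id_V$, where the inner $\id_V$ is viewed as a map from $(V,\B')$ to $(V,\B)$ and the outer $\id_W$ is viewed as a map from $(W,\CC)$ to $(W,\CC')$. Writing $L$ this way is what lets us insert the two base changes on either side.

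Concretely, I would first apply the composition rule to $L\circ \id_V$ with the middle basis being $\B$: this gives $(L\circ \id_V)_{\CC,\B'} = L_{\CC,\B}\,(\id_V)_{\B,\B'} = L_{\CC,\B}\,X_{\B,\B'}$. Then I would apply the composition rule again, this time to $\id_W \circ (L\circ \id_V)$ with middle basis $\CC$, obtaining
\[
L_{\CC',\B'} \;=\; (\id_W)_{\CC',\CC}\,(L\circ \id_V)_{\CC,\B'} \;=\; X_{\CC',\CC}\,L_{\CC,\B}\,X_{\B,\B'}.
\]
The second equality in the statement is then immediate from the remark just before the lemma that $X_{\CC',\CC} = X_{\CC,\CC'}^{-1}$.

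There is essentially no obstacle here: the proof is a mechanical application of the composition rule together with the definition of the base change matrix as the identity expressed with respect to two bases. The only thing to be careful about is the order of the subscripts on $X$, since $X_{\B,\B'}$ and $X_{\B',\B}$ are inverses of each other; writing the composition explicitly as $\id_W \circ L \circ \id_V$ and tracking the source and target bases at each stage makes the correct placement of $X_{\B,\B'}$ (on the right) and $X_{\CC',\CC}$ (on the left) unambiguous.
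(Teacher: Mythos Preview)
Your proof is correct. The paper does not actually give a proof of this lemma; it simply recalls it as a standard fact from linear algebra. Your argument, factoring $L$ as $\id_W\circ L\circ \id_V$ and applying the composition rule $(ML)_{\DD,\B}=M_{\DD,\CC}L_{\CC,\B}$ twice, is the canonical way to derive it and is entirely sound.
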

Let $\B=(b_1,b_2,\dots,b_n)$ be an ordered basis of $V$ and we multiply the $i^{\rm th}$ basis vector by some scalar $\lambda\neq0$ to obtain the basis $\B' = (b_1,\dots,b_{i-1},\lambda b_i, b_{i+1},\dots,b_n)$. Then $X_{\B,\B'}$ is a diagonal matrix. The $i^{th}$ diagonal entry of $X_{\B,\B'}$ is $\lambda$ and all other diagonal entries are $1$. In particular, we have $\det(X_{\B,\B'}) = \lambda$. For our purposes we need to understand a more interesting base change matrix.

\begin{proposition} \label{cob(r)}
With $\B$ and $\B'$ as above, we have we have $\det(X_{\B(r),\B'(r)}) = \lambda^{n-1 \choose r-1}$.
\end{proposition}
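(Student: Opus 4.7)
The plan is to show that the base change matrix $X_{\B(r),\B'(r)}$ is itself diagonal, and then simply count how many of its diagonal entries are $\lambda$.

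First I would unpack what the columns of $X_{\B(r),\B'(r)}$ are: each column corresponds to a basis element $b'_I$ of $\B'(r)$ (with $I = \{i_1 < \cdots < i_r\} \subseteq [n]$), expressed in the basis $\B(r)$. So I need to compute $b'_I = b'_{i_1} \wedge \cdots \wedge b'_{i_r}$ in terms of the wedge products $b_J$.

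The key observation is that $b'_j = b_j$ for $j \neq i$ and $b'_i = \lambda b_i$. Hence, by multilinearity of the wedge product, $b'_I = \lambda \, b_I$ when $i \in I$, and $b'_I = b_I$ when $i \notin I$. In particular each column of $X_{\B(r),\B'(r)}$ has a single nonzero entry on the diagonal, equal to $\lambda$ if $i \in I$ and $1$ otherwise. Thus $X_{\B(r),\B'(r)}$ is diagonal.

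To finish, I would count the subsets $I \subseteq [n]$ with $|I| = r$ and $i \in I$: these are in bijection with $(r-1)$-subsets of $[n] \setminus \{i\}$, so there are $\binom{n-1}{r-1}$ of them. Therefore
\[
\det(X_{\B(r),\B'(r)}) = \lambda^{\binom{n-1}{r-1}},
\]
as claimed. No step is really an obstacle here; the only thing to keep track of is the lexicographic ordering, but since the matrix is diagonal the ordering is irrelevant for the determinant.
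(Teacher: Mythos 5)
Your proof is correct and follows essentially the same route as the paper: both arguments observe that $\B'(r)$ is obtained from $\B(r)$ by scaling exactly those basis vectors $b_I$ with $i \in I$ by $\lambda$, so the base change matrix is diagonal with $\binom{n-1}{r-1}$ entries equal to $\lambda$ and the rest equal to $1$. Your version just spells out the multilinearity step a bit more explicitly; there is no substantive difference.
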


\begin{proof}
It is easy to see that the basis $\B'(r)$ is gotten from $\B(r)$ by scaling some of its basis vectors. More precisely, if a subset $I$ contains $i$, then the basis vector $b_I$ is scaled by $\lambda$. All other basis vectors remain unchanged. The number of subsets containing $i$ is given by ${n-1 \choose r-1}$. Hence $X_{\B(r),\B'(r)}$ is a diagonal matrix in which ${n-1 \choose r-1}$ diagonal entries are $\lambda$ and all other diagonal entries are $1$. The proposition follows since the determinant of a diagonal matrix is the product of the diagonal entries.
\end{proof}

We also need to understand what happens to a linear transformation $L \in \Hom(\bigwedge^rV,\bigwedge^{r+1}V)$ when we change basis. For a basis $\B$ of $V$, let $L_\B=L_{\B(r+1),\B(r)}$ denote the matrix of $L$ in the basis $\B(r)$ and $\B(r+1)$ for the domain and codomain respectively.

\begin{corollary} \label{obvious}
Let $\B$ and $\B'$ be as in Proposition~\ref{cob(r)}. Then for $L\in \Hom(\bigwedge^rV,\bigwedge^{r+1}V)$, we have $\det(L_{\B'}) =  \lambda^{{n-1 \choose r-1} - {n-1 \choose r}} \det(L_\B)$.
\end{corollary}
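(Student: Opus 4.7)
The plan is a direct application of the base change formula to the linear map $L \in \Hom(\bigwedge^r V, \bigwedge^{r+1} V)$, followed by Proposition~\ref{cob(r)} applied at two different levels ($r$ and $r+1$).

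First I would write the base change formula with $\CC = \B(r+1)$ (the basis of the codomain) and the domain basis going from $\B(r)$ to $\B'(r)$. This yields
$$
L_{\B'} \;=\; X_{\B(r+1),\B'(r+1)}^{-1}\, L_\B \, X_{\B(r),\B'(r)}.
$$
Taking determinants and using multiplicativity gives
$$
\det(L_{\B'}) \;=\; \frac{\det(X_{\B(r),\B'(r)})}{\det(X_{\B(r+1),\B'(r+1)})} \cdot \det(L_\B).
$$

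Next I would invoke Proposition~\ref{cob(r)} twice. Since $\B'$ is obtained from $\B$ by scaling the $i$-th basis vector by $\lambda$, the proposition (applied with exterior power $r$) gives $\det(X_{\B(r),\B'(r)}) = \lambda^{\binom{n-1}{r-1}}$, while applied with exterior power $r+1$ it gives $\det(X_{\B(r+1),\B'(r+1)}) = \lambda^{\binom{n-1}{r}}$. Substituting these into the ratio above collapses the exponents into $\lambda^{\binom{n-1}{r-1} - \binom{n-1}{r}}$, which is the desired identity.

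There is no real obstacle here; the only thing to be careful about is the direction of the base change on the codomain side, which contributes an inverse and is therefore responsible for the minus sign in the exponent $\binom{n-1}{r-1} - \binom{n-1}{r}$. The whole argument is a two-line computation once the base change formula is invoked symmetrically at levels $r$ and $r+1$.
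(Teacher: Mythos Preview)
Your proof is correct and follows exactly the same route as the paper: write the base change formula $L_{\B'} = X_{\B(r+1),\B'(r+1)}^{-1} L_\B X_{\B(r),\B'(r)}$, take determinants, and apply Proposition~\ref{cob(r)} at levels $r$ and $r+1$.
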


\begin{proof}
This follows from applying Proposition~\ref{cob(r)} to the base change formula $$L_{\B'} = X_{\B(r+1),\B'(r+1)}^{-1} L_\B X_{\B(r),\B'(r)}.$$
\end{proof}

In fact, we need slightly more general results.
An argument along the lines of the proof of Proposition~\ref{cob(r)} gives the following lemma.

\begin{lemma}
Let $\B$ and $\B'$ be as in Proposition~\ref{cob(r)}. Let $W$ be a $c$-dimensonal vector space with ordered basis $\CC$. Then we have $\det(X_{\B(r) \otimes \CC,\B'(r) \otimes \CC}) = \lambda^{c {n-1 \choose r-1}}$.
\end{lemma}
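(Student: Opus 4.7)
The plan is to imitate the proof of Proposition~\ref{cob(r)} verbatim, simply noting that tensoring with the unchanged basis $\CC$ multiplies the count of scaled basis vectors by $c$.

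First I would describe the basis $\B'(r) \otimes \CC$ in relation to $\B(r) \otimes \CC$. Since $\CC$ is the same on both sides, the base change acts only on the first tensor factor: the vector $b_I \otimes w_j$ is sent to $b_I' \otimes w_j = \lambda\, b_I \otimes w_j$ if $i \in I$, and to $b_I \otimes w_j$ itself otherwise. Consequently the matrix $X_{\B(r) \otimes \CC, \B'(r) \otimes \CC}$ is diagonal, with entry $\lambda$ at the position indexed by $(I,j)$ precisely when $i \in I$, and entry $1$ everywhere else.

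Next I would count the number of $\lambda$'s on the diagonal. The subsets $I \subseteq [n]$ of size $r$ containing $i$ number $\binom{n-1}{r-1}$, and for each such $I$ there are $c$ choices of $j \in [c]$, giving $c \binom{n-1}{r-1}$ diagonal entries equal to $\lambda$. Multiplying the diagonal yields $\det(X_{\B(r) \otimes \CC, \B'(r) \otimes \CC}) = \lambda^{c\binom{n-1}{r-1}}$, as desired.

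A slicker alternative, if one prefers, is to write $X_{\B(r) \otimes \CC, \B'(r) \otimes \CC} = X_{\B(r), \B'(r)} \otimes I_c$ and invoke the identity $\det(A \otimes I_c) = \det(A)^c$, combined with Proposition~\ref{cob(r)}. Either way, there is no real obstacle: the only thing to verify carefully is that because $\CC$ is fixed, the base change respects the tensor product decomposition, so the scaling count from Proposition~\ref{cob(r)} is simply multiplied by the dimension $c$ of $W$.
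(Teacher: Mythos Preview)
Your proposal is correct and matches the paper's own approach: the paper simply states that ``an argument along the lines of the proof of Proposition~\ref{cob(r)}'' yields the lemma, which is precisely what you carry out. Your alternative via $X_{\B(r),\B'(r)}\otimes I_c$ and $\det(A\otimes I_c)=\det(A)^c$ is a valid shortcut but not needed.
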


For a linear transformation $L \in \Hom((\bigwedge^rV) \otimes W,(\bigwedge^{r+1}V) \otimes W)$, let $L_{\B\otimes \CC}$ 
denote the matrix for the linear transformation of $L$ in the bases $\B(r) \otimes \CC$ and $\B(r+1) \otimes \CC$ for the domain and codomain respectively. Following the same idea as Corollary~\ref{obvious}, we get the following:

\begin{corollary} \label{bschange}
Let $\B$ and $\B'$ be as in Propositon~\ref{cob(r)}. Then for a linear transformation $L\in \Hom((\bigwedge^rV) \otimes W,(\bigwedge^{r+1}V)\otimes W)$, we have $\det(L_{\B'\otimes \CC}) =  \lambda^{c ({n-1 \choose r-1} - {n-1 \choose r} ) } \det(L_{\B\otimes \CC})$.
\end{corollary}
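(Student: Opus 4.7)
The plan is to mimic the proof of Corollary~\ref{obvious} verbatim, replacing the scalar factors from Proposition~\ref{cob(r)} with those from the preceding lemma on $\B(r)\otimes \CC$.

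First I would write down the base change formula for $L$ with respect to the decorated bases on domain and codomain:
$$L_{\B'\otimes \CC} = X_{\B(r+1)\otimes \CC,\,\B'(r+1)\otimes \CC}^{-1}\, L_{\B\otimes \CC}\, X_{\B(r)\otimes \CC,\,\B'(r)\otimes \CC}.$$
This is just the base change formula (Lemma~``Base change formula'') applied to the two bases $\B(r)\otimes \CC$ and $\B(r+1)\otimes \CC$ of domain and codomain (viewing $\B'(r)\otimes \CC$ and $\B'(r+1)\otimes \CC$ as the new bases).

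Next I would take determinants of both sides. By the previous lemma (the statement immediately before this corollary), we have
$$\det\bigl(X_{\B(r)\otimes \CC,\,\B'(r)\otimes \CC}\bigr) = \lambda^{c\binom{n-1}{r-1}}, \qquad \det\bigl(X_{\B(r+1)\otimes \CC,\,\B'(r+1)\otimes \CC}\bigr) = \lambda^{c\binom{n-1}{r}},$$
where the second equality is the same lemma with $r$ replaced by $r+1$. Substituting into the determinant of the base change identity yields
$$\det(L_{\B'\otimes \CC}) = \lambda^{-c\binom{n-1}{r}} \cdot \det(L_{\B\otimes \CC}) \cdot \lambda^{c\binom{n-1}{r-1}} = \lambda^{c\left(\binom{n-1}{r-1}-\binom{n-1}{r}\right)} \det(L_{\B\otimes \CC}),$$
which is exactly the claim.

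There is essentially no obstacle here: once one accepts that the same diagonal-rescaling phenomenon observed in Proposition~\ref{cob(r)} carries over to the tensor product $\B(r)\otimes \CC$ (multiplying the exponent by $c=\dim W$, since each $\B(r)$-basis vector now appears in $c$ tensor products $b_I\otimes w_j$), the argument is a one-line determinant calculation. The only minor thing to verify is that tensoring with $\CC$ does not introduce any sign changes in the base change matrix, which is clear because scaling one basis vector of $\bigwedge^r V$ by $\lambda$ scales exactly $c$ basis vectors of $(\bigwedge^r V)\otimes W$ by $\lambda$, giving a diagonal base change matrix with determinant $\lambda^{c\binom{n-1}{r-1}}$.
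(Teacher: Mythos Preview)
Your proposal is correct and follows exactly the approach the paper intends: the paper merely says ``Following the same idea as Corollary~\ref{obvious}'' without writing out details, and what you have done is precisely that --- apply the base change formula and take determinants, using the preceding lemma (with $r$ and $r+1$) in place of Proposition~\ref{cob(r)}.
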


\section{Effects of scaling basis vectors on the matrices of $L_i$'s} \label{scaling}
Let $m = 2p+1$ be a positive integer. Let $\E = (e_1,\dots,e_m)$ denote the standard ordered basis of $K^m$. Recall that for a $v \in K^m$, $L_v \in \Hom(\bigwedge^pK^m,\bigwedge^{p+1}K^m)$ is the linear map that sends $w$ to $v \wedge w$.  Let $\E'$ be the ordered basis obtained from $\E$ by scaling the $i^{th}$ basis vector by $\lambda$, i.e., $\E' = (e_1,\dots, e_{i-1}, \lambda e_i,e_{i+1} \dots, e_m)$. It is easy to understand the effect of this base change on the matrices of $L_i$.

\begin{lemma} \label{mchange}
We have $(L_j)_{\E'} = \begin{cases} (L_j)_\E & \mbox{if }j \neq i, \\  \lambda^{-1}(L_i)_\E & \mbox{if } j = i. \end{cases}$
\end{lemma}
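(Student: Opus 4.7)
The plan is to derive the lemma directly from the base change formula, using the explicit diagonal form of the base change on $\bigwedge^r K^m$ established in the proof of Proposition~\ref{cob(r)}. The maps $L_j = L_{e_j}$ are fixed linear maps $\bigwedge^p K^m \to \bigwedge^{p+1}K^m$ independent of any choice of basis; only their matrix representation changes between $\E$ and $\E'$. So the base change formula yields
\[
(L_j)_{\E'} \;=\; X_{\E(p+1),\E'(p+1)}^{-1} \, (L_j)_\E \, X_{\E(p),\E'(p)}.
\]

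The key input to plug in is the description, extracted from the proof of Proposition~\ref{cob(r)}, that $X_{\E(r),\E'(r)}$ is a diagonal matrix whose diagonal entry indexed by a subset $I \subseteq [m]$ of size $r$ equals $\lambda$ if $i \in I$ and $1$ otherwise. Writing $\chi_i(I) \in \{0,1\}$ for the indicator of $i \in I$, the pre- and post-multiplications above simply scale the $(I,J)$-entry of $(L_j)_\E$ by $\lambda^{\chi_i(J) - \chi_i(I)}$. Since the column of $(L_j)_\E$ indexed by $J$ records $e_j \wedge e_J$ expanded in $\E(p+1)$, it is $\pm e_{J \cup \{j\}}$ when $j \notin J$ and zero otherwise, so only the exponents $\chi_i(J) - \chi_i(J \cup \{j\})$ (for $j \notin J$) are relevant.

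I would then split into the two cases. If $j \neq i$, then $i \in J \cup \{j\}$ iff $i \in J$, so the exponent vanishes and $(L_j)_{\E'} = (L_j)_\E$. If $j = i$, the only nonzero columns satisfy $i \notin J$, so $\chi_i(J) = 0$ while $\chi_i(J \cup \{i\}) = 1$, and every nonzero entry is rescaled by $\lambda^{-1}$, yielding $(L_i)_{\E'} = \lambda^{-1}(L_i)_\E$. The only book-keeping concern is to combine the two base changes on domain and codomain correctly; in the nontrivial columns the two scaling contributions either cancel exactly (when $j \neq i$) or leave precisely one factor of $\lambda^{-1}$ (when $j = i$), so no real obstacle arises — the lemma is essentially an immediate consequence of Proposition~\ref{cob(r)}.
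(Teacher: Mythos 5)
Your proof is correct, but it takes a different route from the paper. You invoke the base change formula $(L_j)_{\E'} = X_{\E(p+1),\E'(p+1)}^{-1}(L_j)_\E\, X_{\E(p),\E'(p)}$, use the explicit diagonal description of the base change matrices from the proof of Proposition~\ref{cob(r)} (diagonal entry $\lambda$ exactly at the subsets containing $i$), and track how each nonzero entry of $(L_j)_\E$ --- which sits in position $(J\cup\{j\},J)$ with $j\notin J$ --- is rescaled by $\lambda^{\chi_i(J)-\chi_i(J\cup\{j\})}$; the case split $j\neq i$ versus $j=i$ then gives exactly the two cases of the lemma. The paper instead avoids the base change formula entirely: it observes that for \emph{any} ordered basis $\B=(b_1,\dots,b_m)$, the matrix of $L_{b_i}$ with respect to $\B(p)$ and $\B(p+1)$ is one and the same universal combinatorial matrix, independent of $\B$; applying this with $\B=\E'$ and using linearity of $v\mapsto L_v$ (namely $e_j=e_j'$ for $j\neq i$ and $e_i=\lambda^{-1}e_i'$) gives the lemma in two lines. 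Your entrywise computation is slightly longer but makes explicit the support structure of $(L_j)_\E$ (one nonzero entry per column $J$ with $j\notin J$), which is essentially the same structural information the paper re-derives later in Section~\ref{gen.case}; the paper's naturality-plus-linearity argument is shorter and requires no bookkeeping of exponents. Both arguments are complete and valid over an arbitrary field.
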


\begin{proof}
It is easy to see that for any basis $\B = (b_1,\dots,b_m)$ of $K^m$, the matrix of $L_{b_i}$ written in the basis $\B(r)$ and $\B(r+1)$ is the same, i.e., $(L_{b_i})_\B = (L_{c_i})_\CC$ for any other basis $\CC = (c_1,\dots,c_m)$. For $j \neq i$, we have $e_j = e_j'$, and hence 
$$
(L_j)_{\E'} := (L_{e_j})_{\E'} = (L_{e_j'})_{\E'} = (L_{e_j})_\E =: (L_j)_\E.
$$
 For $j=i$, we have $e_i = \lambda^{-1}e_i'$, and so 
 $$
 (L_i)_{\E'} := (L_{e_i})_{\E'} = (L_{\lambda^{-1}e_i'})_{\E'} = \lambda^{-1}(L_{e_i'})_{\E'} = \lambda^{-1} (L_{e_i})_\E =: \lambda^{-1} (L_i)_\E.
 $$

\end{proof}

Let 
$$\textstyle L = L_1 \otimes S_{1} + L_2 \otimes S_{2} + \dots + L_{2p+1} \otimes S_{2p+1} \in  \Hom\big((\bigwedge^pK^m) \otimes K^{p+1}, (\bigwedge^{p+1}K^m) \otimes K^{p+1}\big),$$ 
where $S_i$ is defined as in Proposition~\ref{semi.main}. Let $\F$ denote the standard basis of $K^{p+1}$. Hence we have the bases $\E(p) \otimes \F$ and $\E'(p) \otimes \F$ for the domain and the bases $\E(p+1) \otimes \F$ and $\E'(p+1) \otimes \F$ for the codomain. Recall that for a linear transformation $L \in \Hom\big((\bigwedge^rV) \otimes W,(\bigwedge^{r+1}V) \otimes W\big)$,  $L_{\B\otimes \CC}$ denotes the matrix for the linear transformation of $L$ in the bases $\B(r) \otimes \CC$ and $\B(r+1) \otimes \CC$ for the domain and codomain respectively, where $\B$ is a basis for $V$ and $\CC$ is a basis for $W$.

\begin{lemma} \label{bschange1}
We have $\det(L_{\E'\otimes \CC}) = \lambda^{-{2p \choose p}}\det(L_{\E\otimes \CC})$ 
\end{lemma}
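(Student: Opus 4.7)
The plan is to apply Corollary~\ref{bschange} directly to $L$, without unpacking the decomposition $L = \sum_i L_i \otimes S_i$. By the paragraph preceding the statement, $L$ is an element of $\Hom\bigl((\bigwedge^pK^m) \otimes K^{p+1}, (\bigwedge^{p+1}K^m) \otimes K^{p+1}\bigr)$, and $\E'$ is obtained from $\E$ by scaling a single basis vector by $\lambda$, so Corollary~\ref{bschange} applies directly with $V = K^m$, $n = 2p+1$, $r = p$, $W = K^{p+1}$, and $c = p+1$. This immediately yields
$$\det(L_{\E'\otimes \CC}) = \lambda^{(p+1)\left({2p \choose p-1} - {2p \choose p}\right)} \det(L_{\E\otimes \CC}).$$

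The only remaining content is the binomial-coefficient identity
$$(p+1)\left({2p \choose p-1} - {2p \choose p}\right) = -{2p \choose p}.$$
I would derive this from the standard relation $k{n \choose k} = (n-k+1){n \choose k-1}$, specialized to $n=2p$ and $k=p$, which gives $(p+1){2p \choose p-1} = p{2p \choose p}$; subtracting $(p+1){2p \choose p}$ from both sides produces the claimed identity. Substituting it into the exponent above gives exactly $\lambda^{-{2p \choose p}}$, completing the proof.

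There is essentially no obstacle here, since the lemma is a direct specialization of Corollary~\ref{bschange} and a one-line binomial identity. The conceptual point worth emphasizing is that the specific expression $L = \sum_i L_i \otimes S_i$ is irrelevant at this stage: Corollary~\ref{bschange} applies to \emph{any} map in the relevant Hom space, so one does not need to invoke Lemma~\ref{mchange} and reassemble the individual contributions of the summands $L_i \otimes S_i$ by hand.
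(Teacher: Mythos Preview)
Your proposal is correct and follows exactly the paper's own argument: the paper's proof simply cites Corollary~\ref{bschange} and the identity $(p+1)\bigl({2p \choose p-1} - {2p \choose p}\bigr) = -{2p \choose p}$, and you have supplied the same steps with the binomial identity spelled out. Your remark that Lemma~\ref{mchange} is unnecessary at this point is accurate; that lemma is only used later, in Corollary~\ref{phew}.
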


\begin{proof}
This follows from Corollary~\ref{bschange}, since $(p+1)({2p \choose p-1} - {2p \choose p}) = -{2p \choose p}$.
\end{proof}

Let $\lambda = (\lambda_1,\lambda_2,\dots,\lambda_m) \in K^m$ such that $\lambda_i \neq 0$ for $1 \leq i \leq m$. Given an ordered basis $\E = (e_1,\dots,e_m)$, we define another ordered basis $\lambda \cdot \E = (\lambda_1e_1,\lambda_2e_2,\dots,\lambda_me_m)$. Applying the above lemma several times, we get:

\begin{corollary} \label{baby}
We have $\det(L_{(\lambda \cdot \E)\otimes \CC}) = \left(\prod\limits_{i=1}^m \lambda_i\right)^{-{2p \choose p}}\det(L_{\E\otimes \CC})$.
\end{corollary}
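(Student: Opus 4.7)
The plan is to iterate Lemma~\ref{bschange1} once per coordinate. Define a telescoping chain of ordered bases
\[
\E^{(0)}=\E,\quad \E^{(k)}=(\lambda_1 e_1,\lambda_2 e_2,\dots,\lambda_k e_k,\,e_{k+1},\dots,e_m),\quad \E^{(m)}=\lambda\cdot\E,
\]
so that $\E^{(k)}$ differs from $\E^{(k-1)}$ only in its $k$-th vector, which has been rescaled by $\lambda_k$. If at each step I can multiplicatively accumulate a factor of $\lambda_k^{-\binom{2p}{p}}$, then telescoping the products gives exactly the claimed scalar $\bigl(\prod_{i=1}^m\lambda_i\bigr)^{-\binom{2p}{p}}$.

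The step-by-step argument is then immediate from Lemma~\ref{bschange1}. At stage $k$, apply Lemma~\ref{bschange1} with starting basis $\E^{(k-1)}$, index $i=k$, and scalar $\lambda_k$, yielding
\[
\det\bigl(L_{\E^{(k)}\otimes \CC}\bigr)=\lambda_k^{-\binom{2p}{p}}\det\bigl(L_{\E^{(k-1)}\otimes \CC}\bigr).
\]
Chaining these $m$ equalities from $k=1$ up to $k=m$ gives the corollary.

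The only substantive point to verify is that Lemma~\ref{bschange1} is valid when the starting basis is not $\E$ itself but a previously modified $\E^{(k-1)}$. This is fine because the proof of Lemma~\ref{bschange1} only invokes Corollary~\ref{bschange}, whose hypothesis is the purely combinatorial condition that $\B'$ be obtained from $\B$ by scaling one basis vector, together with the arithmetic identity $(p+1)\bigl(\binom{2p}{p-1}-\binom{2p}{p}\bigr)=-\binom{2p}{p}$; neither ingredient depends on the starting basis being the standard one. (Equivalently, one could apply Corollary~\ref{bschange} directly at each stage, bypassing the form of Lemma~\ref{bschange1} altogether.) Thus the iteration is justified and no genuine obstacle arises; the proof is a one-line induction on $m$.
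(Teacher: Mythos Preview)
Your proof is correct and is exactly the approach the paper takes: the paper simply says ``applying the above lemma several times,'' and you have spelled out that iteration via the telescoping chain $\E^{(0)},\dots,\E^{(m)}$. Your extra care in noting that Lemma~\ref{bschange1} (via Corollary~\ref{bschange}) applies to an arbitrary starting basis, not just the standard one, is a valid and welcome clarification.
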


\begin{definition}
Let $M_i$ denote the matrix $(L_i)_\E$. We define $$
M(t_1,\dots,t_{2p+1}) := t_1M_1 \otimes S_{1} + t_2 M_2 \otimes S_{2} + \dots + t_{2p+1} M_{2p+1} \otimes S_{2p+1}.
$$
Define $p(t_1,\dots,t_{2p+1}) := \det(M(t_1,\dots,t_{2p+1})).$
\end{definition}

\begin{corollary} \label{phew}
We have $p(t_1,\dots,t_{m}) = \left( \prod\limits_{i=1}^{m} t_i \right)^{2p \choose p} p(1,1,\dots,1).$
\end{corollary}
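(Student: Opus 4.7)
The plan is to deduce this corollary directly from Corollary~\ref{baby} by recognizing $M(t_1,\ldots,t_m)$ as the matrix of the fixed linear map $L=\sum_i L_i\otimes S_i$ written in a rescaled basis. Fix nonzero scalars $t_1,\ldots,t_m\in K$ and set $\lambda_i=t_i^{-1}$, so that $\lambda\cdot\E=(\lambda_1 e_1,\ldots,\lambda_m e_m)$. The case of general $t_i$ (possibly zero) will follow at the end by a polynomial identity argument.

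First I would iterate Lemma~\ref{mchange} over all $m$ coordinates to conclude that $(L_j)_{\lambda\cdot\E}=\lambda_j^{-1}M_j=t_j M_j$ for each $j$: scaling $e_j$ by $\lambda_j$ divides the matrix of $L_{e_j}$ by $\lambda_j$, while scaling the other $e_i$ leaves it unchanged. Since $S_j$ acts on the second tensor factor $K^{p+1}$ with fixed basis $\F$, its matrix is unaffected, and therefore
$$
L_{(\lambda\cdot\E)\otimes\F}\;=\;\sum_{j=1}^{m}(L_j)_{\lambda\cdot\E}\otimes S_j\;=\;\sum_{j=1}^{m}t_j M_j\otimes S_j\;=\;M(t_1,\ldots,t_m),
$$
while by definition $L_{\E\otimes\F}=M(1,\ldots,1)$.

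Now I would apply Corollary~\ref{baby} with this choice of $\lambda$ to obtain
$$
p(t_1,\ldots,t_m)\;=\;\det\bigl(L_{(\lambda\cdot\E)\otimes\F}\bigr)\;=\;\Bigl(\prod_{i=1}^{m}\lambda_i\Bigr)^{-\binom{2p}{p}}\det\bigl(L_{\E\otimes\F}\bigr)\;=\;\Bigl(\prod_{i=1}^{m}t_i\Bigr)^{\binom{2p}{p}}p(1,\ldots,1),
$$
which is exactly the claimed identity, proved so far only under the hypothesis that every $t_i$ is nonzero.

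The only mild subtlety — and the closest thing to an obstacle — is removing the restriction $t_i\ne 0$. Both sides are polynomials in $t_1,\ldots,t_m$ over $K$ (the left side because each entry of $M(t_1,\ldots,t_m)$ is linear in the $t_i$), and they agree on the Zariski-dense locus where all $t_i$ are nonzero, so they agree as polynomials. If $K$ is finite one instead runs the entire argument over $K(t_1,\ldots,t_m)$, taking $\lambda_i=t_i^{-1}$ in that field, and then observes that both sides in fact lie in $K[t_1,\ldots,t_m]$. Either way the extension is formal, so I expect no real difficulty beyond the basis-change bookkeeping above.
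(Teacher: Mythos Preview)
Your proof is correct and follows essentially the same route as the paper: the paper's one-line argument is exactly ``apply Lemma~\ref{mchange} to Corollary~\ref{baby} with $\lambda=(t_1^{-1},\dots,t_m^{-1})$,'' which is precisely what you unpack. Your extra care with the case $t_i=0$ via a polynomial-identity argument is a detail the paper omits, but the underlying idea is identical.
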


\begin{proof}
This follows from applying Lemma~\ref{mchange} to Corollary~\ref{baby}, where $\lambda = (t_1^{-1},t_2^{-1},\dots,t_m^{-1})$.
\end{proof}

\section{Examples} \label{eg}
 Let us first recall that for an $m \times n$  matrix $A = (a_{i,j})$ and a $B = (b_{k,l})$, we define the Kronecker product $A\otimes B$ by 
$$
A \otimes B = 
\begin{pmatrix} a_{1,1} B & \dots & a_{1,n} B \\
    \vdots & \ddots & \vdots \\
    a_{m,1}B & \dots & a_{m,n}B
    \end{pmatrix}
  $$  
If $A=(a_{i,j})$ is a square $n\times n$ matrix, then its determinant is equal to $\sum_{\sigma\in \Sigma_n}\sgn(\sigma) r_\sigma$, where $\sigma$ runs over all elements of the symmetric group $\Sigma_n$, $\sgn(\sigma)$ is the sign of the permutation $\sigma$ and $r_\sigma= \prod_{i=1}^n a_{i,\sigma(i)}$.
To proceed further, we believe it is necessary to acquaint the reader with small examples.

\begin{example}[$p =1$]
Suppose that  $p = 1$ and  $m = 3$. Let $\E = (e_1,e_2,e_3)$ be the standard basis of $K^3$. Then the basis $\E(1)$ is $\E$ itself, and the basis $\E(2) = (e_{1,2},e_{1,3},e_{2,3})$. In this basis $t_1 L_1 \otimes S_{1} + t_2 L_2 \otimes S_2 + t_3L_3 \otimes S_3$ is given by the block matrix

$$ A:=
\begin{pmatrix}
-t_2S_2 & t_1S_1 & 0 \\
-t_3S_3 & 0 & t_1S_1 \\
0 & -t_3S_3 & t_2S_2
\end{pmatrix}
$$

In other words $A = M(t_1,t_2,t_3)$. We also write out $S_i$.  We have 

$$ 
S_1 = \begin{pmatrix} 
0 &  1\\
0 &  0
\end{pmatrix}, 
S_2 = \begin{pmatrix} 
1 & 0 \\
0 & 1 
\end{pmatrix},
S_3 = 
\begin{pmatrix} 0 & 0 \\ 1 & 0 
\end{pmatrix}. 
$$

Observe that the matrix $A$ is a $6 \times 6$ matrix with entries in $\Z[t_1,t_2,t_3]$. We will try to compute $\det A$ as an element of this ring. In fact, this has been computed by Domokos in \cite{Domokos} already in the context of understanding semi-invariants for Kronecker quivers. We will however analyze the situation thoroughly as it will be useful in handling the general case. We know $\det A = k (t_1t_2t_3)^2$ by Corollary~\ref{phew} and we want to establish that $k = \pm 1$. 

Recall that $\det A = \sum_{\sigma \in \Sigma_6} \sgn(\sigma)  r_\sigma$, with $r_\sigma = \prod_{i=1}^6 a_{i,\sigma(i)}$.  Now, observe that each entry of $A$ is either $0$ or $\pm t_i$. Hence each $r_\sigma$ is either $0$ or $\pm $ monomial (in the $t_i$'s). We know that the final answer must be a multiple of the monomial $(t_1t_2t_3)^2.$ So, it suffices to focus on the permutations $\sigma$ such that $r_\sigma = \pm t_1^2 t_2^2 t_3^2$.

We claim that there is at most one permutation $\sigma$ such that $r_\sigma = \pm t_1^2 t_2^2 t_3^2$. In other words, there is at most one choice of $6$ entries, satisfying the condition that no two entries are in the same row and no two entries are in the same column such that the product of their entries is $\pm t_1^2 t_2^2 t_3^2$.

To see this, observe first there are only two entries of the form $\pm t_1$, since $t_1S_1 = \begin{pmatrix} 
0 &  t_1\\
0 &  0
\end{pmatrix}$ and there are exactly two blocks which are $\pm t_1S_1$. So, in order to get $t_1^2$, we have no choice but to pick both entries. 

Now, there are four entries of the form $\pm t_2$, two in each block of the form $\pm t_2S_2$. Consider the northwest $-t_2S_2$ block. This block occurs in the same block row as a $t_1S_1$. We focus on these two blocks in the top block row.


$$
\begin{pmatrix}
-t_2S_2 &| & t_1S_1\\
\end{pmatrix} =
 \left( \arraycolsep=5pt\def\arraystretch{1} \begin{array}{cc|cc} 
\color{red}-t_2 & 0 &  0 & \color{blue}{t_1} \\
0 & \color{blue}-t_2 &   0 & 0
\end{array}
\right)
$$

 We have already argued that we must pick the blue $ t_1$ in the $t_1S_1$, since all $\pm t_1$'s must be picked. Hence we cannot pick any other entry from that row. This rules out the $-t_2$ that we have colored red. So only the $- t_2$ from the bottom row is available, which we have colored blue. A similar argument shows that you can only pick the $t_2$ in the left column of the southeast most block of the form $t_2S_2$. Since there are only two $\pm t_2$'s available, we have no choice but to pick both of them.

\begin{remark}

We want to think of this in the following way. While considering the northwest block entry $-t_2S_2$, we observe that there is exactly $1$ block entry of the form $\pm t_iS_i$ in the same row with $i < 2$. This is the condition that rules out the top $1$ rows. Similarly, there are $0$ block entries of the form $\pm t_iS_i$ in the same column with $i < 2$. This is the condition that rules out the right $0$ columns. This leaves precisely one non-zero entry in the northwest $t_2S_2$ to choose from. A generalization of such an argument (see Proposition~\ref{elusive}) will be the key to unlocking the general case.
\end{remark}

Continuing with the example, observe that there are only two $\pm t_3$'s, and hence we must pick both of them. These $\pm t_3$'s could potentially be in the same row or column as the choices of $t_1$'s and $t_2$'s, which would be disastrous. However, this doesn't happen. In this case, one can check explicitly. In the general case, however, instead of an explicit check we will use the generalization of the argument mentioned in the above remark. Hence, there is exactly one permutation $\sigma$ for which $r_\sigma = \pm(t_1t_2t_3)^2$. Thus we have that $\det A = \pm(t_1t_2t_3)^2$.

\end{example}

\section{The general case} \label{gen.case}
We will prove Proposition~\ref{semi.main} and consequently Theorem~\ref{main} in this section. Let $m = 2p+1$ be a positive integer, and let $A := M(t_1,\dots,t_m)$. We will begin with some structural results on the matrix $A$. Let $M = t_1M_1 + \dots + t_{2p+1}M_{2p+1}$.

\begin{example} \label{AM}
For $p = 1$, we have $$M = \begin{pmatrix} -t_2 & t_1 & 0 \\ -t_3 & 0 & t_1 \\ 0 & -t_3 & t_2 \end{pmatrix},$$ and $$A = \begin{pmatrix}
-t_2S_2 & t_1S_1 & 0 \\
-t_3S_3 & 0 & t_1S_1 \\
0 & -t_3S_3 & t_2S_2
\end{pmatrix}. $$
\end{example}

\begin{lemma}
 The matrix $M$ is a ${2p+1 \choose p}\times {2p+1\choose p}$ matrix, whose block entries are either $0$ or $\pm t_i$. 
\end{lemma}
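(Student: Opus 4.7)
The plan is to unpack the definition of each $M_i=(L_i)_\E$ directly, read off its entries in the bases $\E(p)$ and $\E(p+1)$, and then combine them into $M=\sum_i t_i M_i$. The key observation is that each entry position of $M$ receives a nonzero contribution from at most one index $i$, so no cancellation or combination of distinct $t_i$'s can occur within a single entry.

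First, I would verify the dimensions. Since $M_i$ represents a linear map $\bigwedge^p K^m \to \bigwedge^{p+1} K^m$, it has size $\binom{m}{p+1}\times \binom{m}{p}$. With $m=2p+1$ we have $\binom{m}{p+1}=\binom{m}{p}=\binom{2p+1}{p}$, which gives the stated dimensions for $M_i$ and hence for their linear combination $M$.

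Next, I would identify the nonzero entries of each $M_i$. The rows are indexed by $(p+1)$-subsets $J\subseteq[m]$ and the columns by $p$-subsets $I\subseteq[m]$, both ordered lexicographically. From $L_i(e_I)=e_i\wedge e_I$ one sees immediately that $L_i(e_I)=0$ whenever $i\in I$, while for $i\notin I$ one has $e_i\wedge e_I=\sgn(i,I)\,e_{I\cup\{i\}}$ with $\sgn(i,I)=(-1)^{|\{j\in I:j<i\}|}\in\{\pm 1\}$. Therefore $(M_i)_{J,I}=\sgn(i,I)$ when $J=I\sqcup\{i\}$, and $(M_i)_{J,I}=0$ otherwise.

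Finally, I would assemble $M$. Its $(J,I)$-entry equals $\sum_i t_i(M_i)_{J,I}$, and only indices $i$ satisfying $J=I\sqcup\{i\}$ contribute. Such an $i$ must equal the unique element of $J\setminus I$ (when $|J\setminus I|=1$), so at most one term in the sum is nonzero. Consequently the $(J,I)$-entry of $M$ is $0$ if $|J\setminus I|\neq 1$, and $\pm t_i$ with $\{i\}=J\setminus I$ otherwise, as claimed. There is no real obstacle here; the proof is a routine unpacking of definitions, and the only point deserving emphasis is the uniqueness of the contributing index $i$, which is what guarantees that each entry is a signed monomial of degree one in a single $t_i$ rather than a more general linear combination.
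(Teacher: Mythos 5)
Your proof is correct and follows the same idea as the paper, whose entire proof is the one-line observation that the nonzero positions of the $M_i$'s are distinct; you simply make this explicit by identifying the nonzero entry of $M_i$ at position $(J,I)$ with the condition $J=I\sqcup\{i\}$ and noting the contributing index is unique. No issues.
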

\begin{proof}
The positions of the nonzero entries of $M_i$'s are clearly distinct. 
\end{proof}

\begin{lemma}
For each $i \in [2p+1]$, there are ${2p \choose p}$ entries of the form $\pm t_i$ in $M$, and all other entries are $0$. 
\end{lemma}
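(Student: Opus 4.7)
The plan is to identify the entries of $M$ combinatorially through the action of wedging with $e_i$ and then count directly. First, I would set up the indexing: the columns of $M_i$ are labeled by the basis vectors $e_I$ of $\bigwedge^p K^m$, where $I \subseteq [m]$ ranges over $p$-subsets ordered lexicographically, and the rows are labeled by the basis vectors $e_J$ of $\bigwedge^{p+1} K^m$, where $J$ ranges over $(p+1)$-subsets. Since $L_i(e_I) = e_i \wedge e_I$, the entry $(M_i)_{J,I}$ equals $0$ if $i \in I$, and equals $\pm 1$ if $i \notin I$ and $J = I \cup \{i\}$ (with the sign determined by the number of elements of $I$ smaller than $i$).

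Next, I would analyze how these contributions combine in $M = \sum_{i=1}^{m} t_i M_i$. For a fixed position $(J, I)$ with $|I| = p$ and $|J| = p+1$, the entry $M_{J,I}$ receives a contribution $\pm t_i$ precisely from those $i$ with $i \notin I$ and $J = I \cup \{i\}$. If $I \not\subset J$, no $i$ satisfies this, so the entry is $0$. If $I \subset J$, then the unique candidate is $i = J \setminus I$, giving exactly one contribution $\pm t_i$. Consequently, the distinct $M_i$ contribute to disjoint sets of positions in $M$, so no cancellation occurs and each nonzero entry of $M$ is of the form $\pm t_i$ for a unique $i$.

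Finally, I would count: the entries of $M$ equal to $\pm t_i$ correspond bijectively to pairs $(I, J)$ with $I$ a $p$-subset of $[m]$ not containing $i$ and $J = I \cup \{i\}$. Such pairs are parametrized by the $p$-subsets $I$ of $[m] \setminus \{i\}$, of which there are $\binom{m-1}{p} = \binom{2p}{p}$. This gives exactly $\binom{2p}{p}$ entries of the form $\pm t_i$ in $M$ and establishes that every other entry is $0$.

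There is no real obstacle here; the content is purely bookkeeping, once one sees that the wedge-product structure guarantees that the nonzero positions for different values of $i$ are automatically disjoint. The only care needed is to order bases consistently with the convention from Section~\ref{pla} so that the entries are well-defined as $\pm 1$ rather than as more general signs; this is immediate from the lexicographic ordering of $\mathcal{E}(p)$ and $\mathcal{E}(p+1)$.
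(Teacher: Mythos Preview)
Your proposal is correct and follows essentially the same approach as the paper: both identify the nonzero entries of $M_i$ with $p$-subsets $I\subseteq[m]$ not containing $i$, count these as $\binom{2p}{p}$, and observe that distinct $M_i$ have disjoint supports so that no cancellation occurs in $M=\sum_i t_iM_i$. Your write-up is more explicit about why the supports are disjoint (via the unique element $i=J\setminus I$), but the argument is the same.
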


\begin{proof}
There are ${2p \choose p}$ subsets $I$ of size $p$ that do not contain $i$. For each such subset $I$, we have $L_i (e_I) = \pm e_{I \cup i}$. The corresponding entry in the matrix is $\pm 1$, and all other entries are $0$. Thus $t_iM_i$ is a matrix with ${2p \choose p}$ entries of the form $\pm t_i$, and all other entries $0$. Since the positions of the nonzero entries of the $t_iM_i$ are distinct from the positions of nonzero entries of $t_jM_j$ for $i \neq j$, we have the required conclusion.
 \end{proof}

\begin{lemma}
Fix an entry $\pm t_i$ in $M$. Then for each $j \neq i$, then the number of entries of the form $\pm t_j$ in the same row or column is exactly $1$.
\end{lemma}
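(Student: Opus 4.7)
The plan is to index the nonzero entries of $M$ combinatorially by pairs of subsets of $[m]$ and then run a one-line dichotomy on whether $j$ belongs to the column-index set or not. First I would set notation: the rows of $M$ correspond to the lexicographic basis $\E(p+1)$ of $\bigwedge^{p+1}K^m$ and the columns to $\E(p)$. From $L_i(e_I)=e_i\wedge e_I$ one reads off that the block $t_iM_i$ contributes an entry $\pm t_i$ at position $(I',I)$ precisely when $i\notin I$ and $I'=I\cup\{i\}$, and $0$ elsewhere. Hence every nonzero entry of $M$ carries a well-defined "type" $i\in[m]$; its column $I$ is a $p$-subset avoiding $i$; and its row is forced to be $I\cup\{i\}$.

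Now fix such an entry $\pm t_i$ at position $(I',I)$ with $I'=I\cup\{i\}$, and fix any $j\neq i$. An entry of type $j$ lying in row $I'$ must sit at column $J$ with $I'=J\cup\{j\}$, forcing $J=I'\setminus\{j\}$; this is a valid $p$-subset if and only if $j\in I'$, and since $j\neq i$ this reduces to $j\in I$. Dually, an entry of type $j$ lying in column $I$ must sit at row $K=I\cup\{j\}$, which is legitimate if and only if $j\notin I$. The conditions $j\in I$ and $j\notin I$ are complementary, so exactly one of the two cases occurs, and in each case the candidate position is determined uniquely. The shared cell $(I',I)$ already carries $\pm t_i$, and thus is never of type $j$, so no double-counting issue arises between "same row" and "same column." This produces exactly one entry of type $\pm t_j$ in the row-or-column.

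I do not anticipate any substantive obstacle: the statement reduces to a clean combinatorial dichotomy ($j\in I$ versus $j\notin I$) once the entries of $M$ are indexed by $(I',I)$. The one subtlety worth stating explicitly, which vanishes on inspection, is the check that the row and column cases cannot both contribute the same cell; and as noted this is immediate because the intersection cell is of type $i$ rather than $j$.
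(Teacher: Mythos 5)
Your proof is correct and follows essentially the same route as the paper: identify the fixed entry with $L_i(e_I)=\pm e_{I\cup\{i\}}$ and split on $j\in I$ versus $j\notin I$, getting the unique $\pm t_j$ in the same row in the first case and in the same column in the second. Your write-up is slightly more careful than the paper's in making the uniqueness and the no-double-counting point explicit, but the underlying argument is identical.
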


\begin{proof}
The fixed entry $\pm t_i$ in $M$ corresponds to the fact that $L_i (e_I) = \pm e_{I \cup \{i\}}$ for some $I$ that does not contain $i$. Now, if $j \in I$, then let $J = I \cup \{i\} \setminus \{j\}$. Then we have $L_j (e_J) = \pm e_{J \cup \{j\}} = \pm e_{I \cup \{i\}}$. This corresponds to a $\pm t_j$ in the same row. On the other hand if $j \notin I$, then $L_j(e_I) = \pm e_{I \cup \{j\}}$ which corresponds to a $\pm t_j$ in the same column. 
\end{proof}

\begin{remark}
It follows from the definition of the tensor product of matrices that by replacing each $t_i$ in $M$ with the block matrix $t_iS_i$, we get the block matrix $A$. See Example~\ref{AM}.
\end{remark}

The above remark applied to the above lemmas yield:

\begin{corollary}
The matrix $A$ is a ${2p+1 \choose p}$-block matrix, whose block entries are either $0$ or $\pm t_iS_i$. 
\end{corollary}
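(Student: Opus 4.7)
The plan is to assemble the corollary directly from the structural information we already have about $M$ together with the remark immediately preceding the statement. There is no real content to prove here beyond combining two observations, so the write-up will be short.

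First, I would invoke the lemma stating that $M = t_1 M_1 + \cdots + t_{2p+1} M_{2p+1}$ is a ${2p+1 \choose p} \times {2p+1 \choose p}$ matrix whose entries are each either $0$ or $\pm t_i$ for some $i \in [2p+1]$. This tells us the block structure we want to impose on $A$, with the correct overall size, and records exactly which scalar stands in each scalar cell of $M$.

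Next, I would apply the remark that precedes the corollary: by the Kronecker product formula $X \otimes Y$ reviewed at the start of Section~\ref{eg}, the matrix $A = M(t_1,\dots,t_m) = \sum_i t_i M_i \otimes S_i$ is obtained from $M$ by replacing each scalar entry $\pm t_i$ with the $(p+1)\times(p+1)$ block $\pm t_i S_i$ and each zero with a $(p+1)\times(p+1)$ zero block. Combining these two statements yields the claim immediately: $A$ is a ${2p+1 \choose p}$-block matrix in which every block entry is either $0$ or $\pm t_i S_i$.

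The only potential obstacle is a purely bookkeeping one, namely being careful that the identification of $M_i \otimes S_i$ with the corresponding block matrix uses the same ordering of the basis of $(\bigwedge^p K^m) \otimes K^{p+1}$ as in Section~\ref{pla} (the lexicographic ordering on $\E(p) \otimes \F$). Since both sides of the block-entry description use this same ordering, this causes no trouble, and the corollary follows.
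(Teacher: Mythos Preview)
Your proposal is correct and matches the paper's own argument exactly: the paper states that ``the above remark applied to the above lemmas'' yields this corollary, which is precisely what you do---combine the lemma on the entries of $M$ with the remark that $A$ is obtained from $M$ by replacing each $\pm t_i$ with the block $\pm t_i S_i$. The additional care you take about basis ordering is fine but not needed for the level of detail in the paper.
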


\begin{corollary}
For each $i \in [2p+1]$, there are ${2p \choose p}$ block entries of the form $\pm t_iS_i$ in $A$, and all other block entries are $0$. 
\end{corollary}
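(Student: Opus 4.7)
The plan is to derive this corollary as a direct consequence of the preceding lemma (which counts the scalar entries of $M$ of each form $\pm t_i$), together with the block-substitution description of $A$ recorded in the remark immediately above. No new combinatorial argument is required; the work is simply to transfer the count from the level of scalar entries of $M$ to the level of block entries of $A$.

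First, I would recall that the previous lemma shows that for each $i \in [2p+1]$, the matrix $M = t_1M_1+\cdots+t_{2p+1}M_{2p+1}$ has exactly ${2p \choose p}$ scalar entries equal to $\pm t_i$, with all remaining entries zero. This rested on the observation that the nonzero positions of $t_iM_i$ and $t_jM_j$ are disjoint for $i\neq j$, so the counts for each index $i$ add up without interference. Next, I would invoke the remark that $A$ is obtained from $M$ by replacing each scalar entry $\pm t_i$ with the $(p+1)\times(p+1)$ block $\pm t_iS_i$ and each scalar entry $0$ with the $(p+1)\times(p+1)$ zero block; this is exactly the definition of the Kronecker sum $\sum_i t_i M_i \otimes S_i$.

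Finally, I would conclude by applying the substitution to the count: the ${2p \choose p}$ scalar positions of $M$ occupied by $\pm t_i$ correspond bijectively, under the block-expansion map, to ${2p \choose p}$ block positions of $A$ occupied by $\pm t_iS_i$, while all other block positions of $A$ come from zero entries of $M$ and are therefore zero. Since this holds for every $i$, the corollary follows. There is no substantive obstacle in the argument; this is really a bookkeeping corollary that records, in block-matrix language, what has already been established at the scalar level. It is stated separately because the subsequent analysis of $\det A$ will track contributions of each $\pm t_iS_i$-block rather than individual scalar entries, exactly as was done in the $p=1$ example.
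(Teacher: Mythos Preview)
Your proposal is correct and matches the paper's approach exactly: the paper simply states that ``the above remark applied to the above lemmas yield'' this corollary, and you have spelled out precisely that deduction---transferring the scalar count from the preceding lemma to the block level via the substitution $\pm t_i \mapsto \pm t_iS_i$. One minor terminological quibble: the expression $\sum_i t_i M_i \otimes S_i$ is a sum of Kronecker products, not a ``Kronecker sum'' (which conventionally denotes $A\otimes I + I\otimes B$), but this does not affect the argument.
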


\begin{corollary} \label{one each}
Fix a block entry $\pm t_iS_i$ in $A$. Then for each $j \neq i$, the number of block entries of the form $\pm t_jS_j$ in the same block row or same block column is exactly $1$.
\end{corollary}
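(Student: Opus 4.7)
The plan is to reduce this corollary to the analogous statement already proved for the matrix $M$ (the lemma directly preceding it), by exploiting the remark that $A$ is obtained from $M$ by replacing the scalar entry $t_i$ in each position by the block $t_i S_i$. Thus the combinatorial pattern of block entries of $A$ is \emph{identical} to the pattern of nonzero scalar entries of $M$: there is a bijection between block rows (resp.\ block columns) of $A$ and rows (resp.\ columns) of $M$, and under this bijection a block entry $\pm t_i S_i$ of $A$ at block position $(I,J)$ corresponds to a scalar entry $\pm t_i$ of $M$ at entry position $(I,J)$.

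First I would make this bijection explicit: index the block rows of $A$ by subsets $I\subseteq[2p+1]$ of size $p+1$ (matching the basis $\E(p+1)$) and the block columns by subsets $J\subseteq[2p+1]$ of size $p$ (matching $\E(p)$), exactly as for the rows and columns of $M$. By construction of $A=M(t_1,\dots,t_m)$, the block in position $(I,J)$ is $\pm t_i S_i$ precisely when the $(I,J)$ entry of $M$ is $\pm t_i$, and is zero otherwise. In particular, ``same block row/column in $A$'' translates verbatim into ``same row/column in $M$''.

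Next I would apply the previous lemma, which asserts that if $\pm t_i$ is fixed in $M$, then for every $j\neq i$ there is exactly one entry of the form $\pm t_j$ lying in the same row or the same column as the fixed entry. Transporting this statement across the bijection above gives the desired conclusion for $A$: for the fixed block $\pm t_i S_i$, there is exactly one block entry $\pm t_j S_j$ in the union of its block row and block column.

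There is essentially no obstacle here: the corollary is just the ``block-level'' reading of the scalar-level lemma, made legitimate by the identification of $A$ with the substitution $t_i\mapsto t_iS_i$ applied to $M$. The only care required is to verify that the row/column indexing for $A$ inherited from the bases $\E(p)\otimes\F$ and $\E(p+1)\otimes\F$ does indeed decompose into block rows indexed by subsets of size $p+1$ and block columns indexed by subsets of size $p$, so that the bijection with rows/columns of $M$ is clean. Once this bookkeeping is noted, the corollary is immediate.
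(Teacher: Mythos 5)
Your proposal is correct and matches the paper's argument: the paper likewise obtains this corollary by applying the remark that $A$ arises from $M$ by substituting the block $t_iS_i$ for each entry $t_i$, so the block-row/block-column statement for $A$ is just the row/column lemma for $M$ read at the block level. Your explicit bookkeeping of the indexing by subsets is a harmless elaboration of the same idea.
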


\begin{definition}
Let $P = \pm t_iS_i$ be a block entry of $A$. Suppose there are $x$ entries of the form $\pm t_jS_j$ with $j < i$ in the same block row and $y$ entries of the form $\pm t_jS_j$ in the same block column. Then we call the $(x+1,p-y)^{th}$ entry of $P$, the {\em elusive entry} of $P$.
 \end{definition}
 
 \begin{lemma}
 The elusive entry of any block $P = \pm t_iS_i$ is a $\pm t_i$. Further, with $x$ and $y$ as defined in the previous definition, all other nonzero entries of $P$ are in the top $x$ rows or the right $y$ columns.
 \end{lemma}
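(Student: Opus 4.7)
The plan is to prove this by direct verification using the bijective dictionary between blocks of $A$ and subsets of $[m]$. First I would set up coordinates: the block $P = \pm t_i S_i$ sits at a block position $(I, J)$ with $I = J \sqcup \{i\}$, $|I| = p+1$, $|J| = p$. Under this correspondence, the $\pm t_a S_a$ blocks in the same block row as $P$ match up with $a \in I \setminus \{i\} = J$, while those in the same block column match up with $a \in [m] \setminus I$. Thus $x = |\{a \in J : a < i\}|$, and $y$ is the analogous count on $[m] \setminus I$ with the sign convention fixed by the definition above.

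The crucial step will be an arithmetic identity showing that the specified elusive position $(x+1, p-y)$ lies on the antidiagonal $\{k - j = p + 1 - i\}$. Since $J$ and $[m] \setminus I$ partition $[m] \setminus \{i\}$, each of size $p$, and $|\{a \in [m] \setminus \{i\} : a < i\}| = i - 1$, a short count yields a linear relation among $x, y, i, p$, and consequently $k - j = p + 1 - i$ at the elusive position. Since the nonzero entries of $S_i$ are precisely those lying on this antidiagonal within the admissible range $\max(1, i - p) \leq j \leq \min(p + 1, i)$, this identity, combined with an admissibility check reducing to $0 \leq x \leq p$ and $0 \leq y \leq p$, proves that $S_i$ is nonzero at the elusive position. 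Because $P = \pm t_i S_i$, the elusive entry is $\pm t_i$.

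For the second claim I would use that every nonzero entry of $S_i$ lies on the same antidiagonal. If $(j', k')$ is a nonzero entry other than the elusive one, then $j' \neq x+1$: either $j' \leq x$, which places the entry in the top $x$ rows, or $j' \geq x+2$, in which case $k' = j' + p + 1 - i$ exceeds the elusive column index, and a brief check using the admissible-row bounds confirms that $k'$ lies among the rightmost $y$ columns. The main obstacle here is not mathematical depth but bookkeeping: the entire argument hinges on getting the definitions of $x$ and $y$ exactly in sync with the formula for the elusive position, after which the lemma follows by inspection of the antidiagonal structure of $S_i$.
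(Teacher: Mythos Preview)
Your approach is essentially the paper's: both arguments reduce to the identity $x+y=i-1$ (the paper invokes Corollary~5.5 in one line; you obtain it by the explicit partition of $[m]\setminus\{i\}$ into $J$ and $[m]\setminus I$---this is the same count), and then place the elusive position on the nonzero diagonal $k-j=p+1-i$ of $S_i$, with the second claim read off from the diagonal structure (the paper calls it ``obvious'', you spell out the two cases $j'\le x$ and $j'\ge x+2$).

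One bookkeeping caveat you will hit when you actually execute the ``short count'': with $x+y=i-1$, the stated position $(x+1,\,p-y)$ gives $k-j=(p-y)-(x+1)=p-i$, not $p+1-i$. The column index that makes the argument work is $p+1-y$; the paper's definition carries the same off-by-one, and its own proof line (``$p-y=x+1-i+p+1$'') only balances if one reads the column as $p+1-y$. Your plan is correct once this is adjusted.
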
 

\begin{proof}
The equality $x + y = i-1$ follows from Corollary~\ref{one each}. Indeed, we have $S_i(x+1,p - y) = 1$ as $p-y = x+1 - i + p + 1$ follows from $x+y = i-1$. Thus there is a $t_i$ in position $(x+1,p-y)$ in the block $P$. The second statement is obvious since the only nonzero entries are along the diagonal containing $(x+1,p-y)$. 
\end{proof}

Let us recall that a permutation $\sigma \in \Sigma_n$ is a choice of $n$ entries subject to the condition that there are no two entries in the same row and no two entries in the same column. In order for $r_\sigma = \pm (t_1t_2 \dots t_{2p+1})^{{2p \choose p}}$, we must make such a choice, where each entry chosen is of the form $\pm t_i$ and for each $i$, there are ${2p \choose p}$ entries chosen of the form $\pm t_i$.

\begin{proposition} \label{elusive}
In order for $r_\sigma = \pm (t_1t_2 \dots t_{2p+1})^{{2p \choose p}}$, we must choose the elusive entry from each nonzero block entry. 
\end{proposition}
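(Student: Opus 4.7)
The plan is to prove Proposition~\ref{elusive} by induction on $i$, combining the preceding structural lemma about $\pm t_i S_i$ with the incidence geometry of the block rows and block columns of $A$.

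First I will unpack that incidence structure. Block rows of $A$ are indexed by $(p+1)$-subsets $J \subseteq [2p+1]$ and block columns by $p$-subsets $I$; the block at position $(J,I)$ is nonzero iff $J = I \cup \{i\}$ for some $i \in J$, in which case it equals $\pm t_i S_i$. Thus the indices $j$ of the nonzero blocks occurring in block row $J$ are exactly the elements of $J$, and those in block column $I$ are exactly $[2p+1] \setminus I$. In particular, for a fixed block $P = \pm t_i S_i$ at position $(J, I)$ with $I = J \setminus \{i\}$, the quantities in the definition of elusive are $x = |\{j \in J : j < i\}|$ and $y = (i-1) - x$.

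The key combinatorial observation I will establish is the following. If $j_1 < j_2 < \ldots < j_x$ are the elements of $J$ less than $i$, then the block $P'_s = \pm t_{j_s} S_{j_s}$ lying in the same block row as $P$ has its own $x$-parameter equal to $s-1$, so its elusive entry sits in row $s$ of the block row of $P$. Hence the elusive entries of $P'_1, \ldots, P'_x$ occupy precisely rows $1, 2, \ldots, x$ of the block row of $P$, i.e., the top $x$ rows of $P$. A symmetric computation using the block column of $P$ shows that the elusive entries of the $y$ smaller-index blocks $P''_1, \ldots, P''_y$ in the block column of $P$ occupy precisely the right $y$ columns of $P$.

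With this observation the induction runs as follows. Since $r_\sigma$ is a nonzero monomial, every chosen entry is nonzero, and the ${2p \choose p}$ factors of $t_i$ in $r_\sigma$ must come from the ${2p \choose p}$ blocks of type $\pm t_i S_i$. For the base case $i = 1$, each block $\pm t_1 S_1$ has a unique nonzero entry (at $(1, p+1)$, which is already the elusive entry), so exactly one entry is chosen per block and it is elusive. For the inductive step, fix a block $P = \pm t_i S_i$ and assume the conclusion for all smaller indices. Then the chosen entries in the smaller-index blocks $P'_1, \ldots, P'_x$ and $P''_1, \ldots, P''_y$ are their elusive entries, using up the top $x$ rows of $P$'s block row and the right $y$ columns of $P$'s block column. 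Any entry picked from $P$ must therefore lie in rows $\ge x+1$ and columns $\le p+1-y$; by the preceding structural lemma together with $x + y = i - 1$, the only nonzero entry of $P$ in that region is the elusive entry at $(x+1, p+1-y)$. Hence at most one entry is chosen from $P$, and it must be the elusive one. Since the ${2p \choose p}$ blocks of type $\pm t_i S_i$ must collectively contribute ${2p \choose p}$ factors of $t_i$, exactly one entry is chosen from each, completing the induction. The main obstacle is the combinatorial identification in the second paragraph of where the elusive entries of the smaller-index blocks land; once that is in hand, the remaining steps amount to setting up the inductive bookkeeping correctly.
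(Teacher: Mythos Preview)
Your proof is correct and follows essentially the same induction as the paper's proof. The paper's induction step records the same key fact you isolate---that the $s$-th smaller-index block $Q_s$ in the same block row has $x$-parameter $s-1$, hence its elusive entry lies in row $s$---but states it tersely as ``clearly the block entry $Q_k$ satisfies the hypothesis of the claim for $k-1$''; your version makes this explicit via the subset indexing of block rows and columns, and likewise spells out the symmetric column computation, which the paper leaves to the reader.
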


\begin{proof}
Let $P = \pm t_iS_i$ be a nonzero block entry of $A$. We proceed by induction on $i$. 

\begin{itemize}
\item \textbf{Base Case: $i = 1$.}

In this case, observe that there is exactly one nonzero entry, which is $\pm t_1$, and that is precisely the elusive entry. There are ${2p \choose p}$ such block entries. In order for the power of $t_1$ in  $r_\sigma$ to be ${2p \choose p}$, we have no choice but to choose the elusive entries from each block entry of the form $\pm t_1S_1$. \\

\item \textbf{Induction Step:} 

Suppose the claim is true for all $j <i$. Let the block entries in the same row of the form $\pm t_kS_k$ with $k < i$ be $Q_1 = \pm t_{j_1}S_{j_1}, Q_2 = \pm  t_2 S_{j_2}, \dots, Q_x = \pm t_{j_x}S_{j_x}$ with $1 \leq j_1 < j_2 < \dots < j_x < i$. Then clearly the block entry $Q_k$ satisfies the hypothesis of the claim for $k-1$. Hence, by induction we would have picked the $\pm t_{j_k}$ from the $k^{th}$ row. Hence, we cannot pick the $\pm t_i$'s in the first $x$ rows of $P$.

By a similar argument, we cannot pick the $t_i$'s in the right $y$ columns, where $y$ is the number of the block entries of the form $\pm t_kS_k$ with $k < i$ in the same column. This leaves precisely one non-zero entry in $P$, which is the elusive entry. Now, once again we have precisely ${2p \choose p}$ blocks of the form $\pm t_iS_i$, and we can pick at most one $\pm t_i$ from each one. Since we want the power of $t_i$ in $r_\sigma$ to be ${2p \choose p}$, we have no choice but to pick all of them.

\end{itemize}
\end{proof}

\begin{corollary}
There is at most one permutation $\sigma$ such that $r_{\sigma} = \pm (t_1t_2 \dots t_{2p+1})^{{2p \choose p}}.$
\end{corollary}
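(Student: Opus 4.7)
My plan is to deduce this corollary directly from Proposition~\ref{elusive}. First I would observe that a permutation $\sigma$ corresponds to a choice of $n = (p+1){2p+1 \choose p}$ entries of $A$, one in each row and one in each column, and that in order for $r_\sigma = \pm(t_1 \cdots t_{2p+1})^{{2p \choose p}}$, every chosen entry must be of the form $\pm t_i$, with each $i \in [2p+1]$ appearing exactly ${2p \choose p}$ times among them. Using the identity $(2p+1){2p \choose p} = \tfrac{(2p+1)!}{p!\,p!} = (p+1){2p+1 \choose p} = n$, I would note that the total number of nonzero blocks $\pm t_i S_i$ in $A$ equals $n$. Since $\sigma$ must pick $n$ entries all lying in nonzero blocks, it must pick exactly one entry from each nonzero block.

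Next, I would invoke Proposition~\ref{elusive}, which forces the entry chosen inside each nonzero block $\pm t_i S_i$ to be its elusive entry. Each nonzero block has a single elusive entry, whose position $(x+1,\,p-y)$ is determined by the block's location in $A$ through the counts $x$ and $y$ of nonzero blocks $\pm t_j S_j$ with $j<i$ in the same block row and block column, respectively. Hence the full list of $n$ entries that $\sigma$ picks is pinned down, and at most one permutation $\sigma$ can yield the monomial $\pm(t_1 \cdots t_{2p+1})^{{2p \choose p}}$, which is exactly the claim.

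I do not anticipate any real obstacle here, since Proposition~\ref{elusive} already carries out the substantive combinatorics. The only bookkeeping step is the counting identity above, which ensures that forcing one elusive entry per nonzero block exhausts precisely the $n$ positions a permutation must fill, leaving no residual freedom; note that this argument only yields \emph{uniqueness}, not existence — whether the elusive entries actually form the support of a genuine permutation is a separate question, settled in the preceding sections.
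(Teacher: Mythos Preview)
Your proposal is correct and follows the paper's approach: the corollary is stated there without proof, as an immediate consequence of Proposition~\ref{elusive} together with the count $(2p+1)\binom{2p}{p}=(p+1)\binom{2p+1}{p}=n$. One small wrinkle: the inference ``$\sigma$ picks $n$ nonzero entries and there are $n$ nonzero blocks, hence exactly one entry per block'' is not valid from counting alone, since a permutation could in principle select two entries from one block and none from another. This slip is harmless, though, because Proposition~\ref{elusive} already says the elusive entry of \emph{every} nonzero block must be among the chosen entries; with $n$ blocks and $n$ choices, those elusive entries exhaust the permutation, which is exactly the uniqueness you want.
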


\begin{proof} [Proof of Proposition~\ref{semi.main}]

We know that $p(t_1,\dots,t_m) = \det(M(t_1,\dots,t_m)) = k (t_1t_2\dots t_{2p+1})^{{2p \choose p}}$, where $k = p(1,\dots,1) \in K$ by Corollary~\ref{phew}. We also know that each $r_\sigma$ is $\pm$ monomial. Further, by the above Proposition, there is exactly one $r_\sigma$ which gives us $\pm (t_1t_2\dots t_{2p+1})^{{2p \choose p}}$, and hence we must have $k = \pm 1 \neq 0$. But $k = p(1,\dots,1)$, and hence $L$ is invertible, since $p(1,\dots,1) = \det M(1,\dots,1)$ and $M(1,\dots,1)$ is the matrix for $L$ in some coordinates. 

\end{proof}

\begin{proof} [Proof of Theorem~\ref{main}]
As remarked before, the proof of $(1)$ as done in \cite{DM2} works for any field $K$. For $(2)$, observe that $\left(L_1 \otimes (S_1 \oplus S_1) + \dots +  L_{2p+1} \otimes (S_{2p+1} \oplus S_{2p+1})\right) \in \mathcal{X}_L^{2p+2} = \mathcal{X}_L^{m+1}$ is invertible since we have 

\begin{multline*}
L_1 \otimes (S_1 \oplus S_1) + \dots +  L_{m} \otimes (S_{m} \oplus S_{m}) = \\=(L_1 \otimes S_1 + \dots +L_{m}\otimes S_m) \oplus (L_1 \otimes S_1 + \dots +L_{m}\otimes S_m).
\end{multline*}

For $(3)$, we use the computation in \cite[Theorem~6.1]{DM2}, which uses concavity of blow-ups proved in \cite{DM}.

\end{proof}

\section{Explicit equations for border rank} \label{explicit}
In this section, we find non-trivial equations for the border rank of tensors in $K^d \otimes K^d \otimes K^d$. We first treat the case when $d$ is odd. 
\subsection{The case $d$ is odd.}
When $d$ is odd, we set $d =m = 2p+1$. Let $L : K^m \rightarrow \Hom(\bigwedge^pK^m,\bigwedge^{p+1}K^m)$ be as in Section~\ref{lwr.bds}.
Let $D = \dim \bigwedge^pK^m  = \dim \bigwedge^{p+1}K^m = {m \choose p}$. We have the map 

\begin{equation*}
\begin{array}{ccc}
\phi_L:K^m \otimes K^m \otimes K^m &\longrightarrow &\M_{mD,mD} \\
\sum\limits_i s_i \otimes X_i &\longmapsto & \sum\limits_i L(s_i) \otimes X_i.
\end{array}
\end{equation*}

\begin{corollary} \label{odd}
Let $N = (2m-4)\cdot {2p \choose p} + 1$. Then the $N \times N$ minors of $\phi_L$ are polynomials that vanishes on tensors of border rank $\leq 2m-4$. At least one of these is a non-trivial polynomial.
\end{corollary}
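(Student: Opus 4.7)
The plan is to assemble Corollary~\ref{def.pol} and Lemma~\ref{non.triv} with the rank data supplied by Theorem~\ref{main}; essentially all the work has been done and what remains is bookkeeping. First I would invoke Corollary~\ref{def.pol} with the integer $2m-4$ playing the role of the border rank parameter in that statement (which, unfortunately, is also called $m$ there; here $m = 2p+1$ is fixed). Part~(1) of Theorem~\ref{main} gives $\rk(\mathcal{X}_L) = \binom{2p}{p}$, so the relevant cutoff supplied by that corollary is $D = (2m-4)\binom{2p}{p}$. Consequently the $(D+1)\times(D+1)$ minors of $\phi_L(T)$ vanish on $\overline{Z}_{2m-4}$, and since $N = D+1 = (2m-4)\binom{2p}{p}+1$, this is precisely the vanishing statement of the corollary.

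For the non-triviality claim, I would apply Lemma~\ref{non.triv} with $b = c = m$ and with the minor size taken to be $N$. That lemma asserts that some $N\times N$ minor of $\phi_L$ is a non-trivial polynomial if and only if $\rk(\mathcal{X}_L^{\{m,m\}}) \geq N$. Part~(3) of Theorem~\ref{main} gives the strict inequality $\rk(\mathcal{X}_L^{\{m\}}) > \binom{2p}{p}(2m-4) = N-1$, and because ranks are integers this is the same as $\rk(\mathcal{X}_L^{\{m\}}) \geq N$. Combining these two observations produces Corollary~\ref{odd}.

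There is no real technical obstacle beyond careful accounting: the heavy lifting is already concentrated in Proposition~\ref{semi.main} (which drives part~(2) of Theorem~\ref{main}) and in the concavity-of-blow-ups input from \cite{DM2,DM} that underpins part~(3). The only thing to verify is that the specific integer $N = (2m-4)\binom{2p}{p}+1$ appearing in the statement matches on the nose both the $D+1$ delivered by Corollary~\ref{def.pol} and the integer threshold produced by the strict inequality in Theorem~\ref{main}(3); the two match because $\rk(\mathcal{X}_L) = \binom{2p}{p}$ makes the two computations of $N$ identical.
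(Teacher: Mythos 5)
Your proposal is correct and follows the paper's own proof exactly: part~(1) of Theorem~\ref{main} gives $\rk(\mathcal{X}_L)={2p \choose p}$, part~(3) (with integrality) gives $\rk(\mathcal{X}_L^{\{m\}})\geq N$, and then Corollary~\ref{def.pol} and Lemma~\ref{non.triv} deliver the vanishing and non-triviality claims respectively. The paper states this in two lines; you have merely made the bookkeeping explicit, and the details check out.
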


\begin{proof}
By Theorem~\ref{main}, we have $\rk (\mathcal{X}_L) = {2p \choose p}$, and $\rk (\mathcal{X}_L^{m}) \geq N$. Applying Corollary~\ref{def.pol} and Lemma~\ref{non.triv}, we get the required result.
\end{proof}

\begin{remark}
In \cite{DM2} under the assumption $K = \C$, an explicit tensor of border rank $\geq 2d-3$ is given. Having extended results to any field $K$, it is clear that the same tensor has border rank $\geq 2d-3$ in any field $K$.
\end{remark}

\subsection{The case $d$ is even.}
In this case,  we set $m = 2p+1 = d-1$.  $L : K^m \rightarrow \Hom(\bigwedge^pK^m,\bigwedge^{p+1}K^m)$ be as in Section~\ref{lwr.bds}. We have the map 
\begin{equation*}
\begin{array}{ccc}
\phi_L:K^m \otimes K^{m+1} \otimes K^{m+1} &\longrightarrow &\M_{(m+1)D,(m+1)D} \\
\sum\limits_i s_i \otimes X_i &\longmapsto & \sum\limits_i L(s_i) \otimes X_i.
\end{array}
\end{equation*}

$\det(\phi_L)$ is a polynomial on $K^{d-1} \otimes K^d \otimes K^d$. Take any projection $\pi: K^d \rightarrow K^{d-1}$, and let $\psi = \pi \otimes \id  \otimes\id : K^d \otimes K^d \otimes K^d \rightarrow K^{d-1} \otimes K^d \otimes K^d.$ Let $f = \psi^*(\det \phi_L)$ be the pull back of the polynomial $\det(\phi_L)$ under $\psi$.  

\begin{corollary} \label{even}
The polynomial $f$ is a non-trivial polynomial that vanishes on tensors of border rank $\leq 2d-3$.
\end{corollary}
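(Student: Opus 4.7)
The plan is to derive both the nontriviality of $f$ and its vanishing on tensors of border rank $\le 2d-3$ from the results already established for the odd-dimension map $\phi_L$, combined with the fact that $\psi$ is a surjective linear map which cannot increase border rank.

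For nontriviality, I would invoke Theorem~\ref{main}(2): $\rk(\mathcal{X}_L^{\{m+1\}})$ is full, meaning the image of $\phi_L : K^m \otimes K^{m+1} \otimes K^{m+1} \to \M_{(m+1)D,(m+1)D}$ contains an invertible matrix. Equivalently, $\det \phi_L$ is a nonzero polynomial on $K^{d-1} \otimes K^d \otimes K^d$. Taking $\pi$ to be a coordinate projection, $\psi = \pi \otimes \id \otimes \id$ is surjective, and the pullback of a nonzero polynomial under a surjective linear map is again nonzero. Hence $f$ is nontrivial.

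For the vanishing claim, I would first observe that $\psi$ sends pure tensors to pure tensors (or to zero), so it does not increase tensor rank; passing to Zariski closures, it does not increase border rank either, and so $\brk(\psi(T)) \le \brk(T)$ for every $T \in K^d \otimes K^d \otimes K^d$. Now suppose $f(T) \ne 0$; then $\phi_L(\psi(T))$ has full rank $(m+1)D$, so Lemma~\ref{low.bds} combined with Theorem~\ref{main}(1) (giving $\rk(\mathcal{X}_L) = {2p \choose p}$) yields
$$\brk(\psi(T)) \ge \frac{(m+1)D}{\rk(\mathcal{X}_L)} = \frac{(2p+2){2p+1 \choose p}}{{2p \choose p}} = 2(2p+1) = 2d-2.$$
Combining with the previous inequality gives $\brk(T) \ge 2d-2$; the contrapositive is exactly the desired statement that $f(T) = 0$ whenever $\brk(T) \le 2d-3$.

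I do not anticipate any substantive obstacle. The only numerical input is the binomial identity ${2p+1 \choose p}/{2p \choose p} = (2p+1)/(p+1)$, which, after multiplying by $2p+2 = d$ (using $p+1 = d/2$), yields exactly $2d-2$. The remaining ingredients (surjectivity of $\psi$ and the fact that linear maps do not increase border rank) are standard, so the argument reduces to chaining together the hypotheses already recorded in Lemma~\ref{low.bds} and Theorem~\ref{main}.
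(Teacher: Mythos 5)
Your proposal is correct and follows essentially the same route as the paper: the paper likewise notes that $\psi$ cannot increase border rank and then runs the Corollary~\ref{odd}-type computation (Lemma~\ref{low.bds}/Corollary~\ref{def.pol} together with Theorem~\ref{main}) on $\psi(T)$, with nontriviality coming from Theorem~\ref{main}(2) and the surjectivity of $\psi$. You have merely written out the numerics $(m+1)\binom{m}{p}/\binom{2p}{p}=2d-2$ that the paper leaves implicit.
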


\begin{proof}
If $T \in \K^d \otimes K^d \otimes K^d$ has border rank $\leq 2d-3$, then so does $\psi(T)$. A similar calculation as in Corollary~\ref{odd} applied to $\psi(T)$ gives us the required result. 
\end{proof}

\begin{remark}
Just as in \cite{Landsberg}, we have that the tensor $T = \sum_{i=1}^m e_i \otimes (S_i \oplus S_i)$ has border rank $\geq 2d-2$ since the polynomial $f$ does not vanish on it. 
\end{remark}

\end{document}